\newtheorem{theorem}{Theorem}[section]
\newtheorem{corollary}[theorem]{Corollary}
\newtheorem{proposition}[theorem]{Proposition}
\theoremstyle{definition}
\newtheorem{definition}[theorem]{Definition}
\newtheorem{example}[theorem]{Example}
\newcommand{\leqnomode}{\tagsleft@true}
\newcommand{\reqnomode}{\tagsleft@false}
\DeclareMathOperator{\End}{End}
\DeclareMathOperator{\Aut}{Aut}
\DeclareMathOperator{\Gal}{Gal}
\DeclareMathOperator{\Perm}{Perm}
\DeclareMathOperator{\FPF}{FPF}
\newcommand{\gen}[1]{\langle #1 \rangle} 
\newcommand{\B}{\mathfrak{B}}
\numberwithin{equation}{section}
\begin{document}
\newtheorem*{thm}{Theorem}

\title{Abelian fixed point free endomorphisms and the Yang-Baxter equation}
\author{Alan Koch}
\address{Department of Mathematics, Agnes Scott College, 141 E. College Ave., Decatur, GA 30030, USA \\ akoch@agnesscott.edu}

\author{Laura Stordy}
\address{School of Mathematics, Georgia Institute of Technology, 686 Cherry St NW, Atlanta, GA 30332, USA \\ lstordy3@gatech.edu }

\author{Paul J.~Truman}
\address{School of Computing and Mathematics, Keele University, Staffordshire, ST5 5BG, UK \\ p.j.truman@keele.ac.uk }
\date{\today       }

\begin{abstract} We obtain a simple family of solutions to the set-theoretic Yang-Baxter equation, one which depends only on considering special endomorphisms of a finite group. We show how such an endomorphism gives rise to two non-degenerate solutions to the Yang-Baxter equation, solutions which are inverse to each other. We give concrete examples using dihedral, alternating, symmetric, and metacyclic groups. 
	\end{abstract}

\maketitle
\section{Introduction}

In the span of just a few years, Chen-Ning Yang \cite{Yang67}, working in theoretical physics, and Rodney Baxter \cite{Baxter72}, working in statistical mechanics, independently developed an equation which is now commonly referred to as the {\it Yang-Baxter equation}. The Yang-Baxter equation has been found to have applications in many areas of mathematics and physics, including quantum groups \cite{LambeRadford97}, knots \cite{Jimbo89a,Jimbo94}, analysis of integrable systems \cite{Jimbo89b}, and quantum computing \cite{Chen12}.

The Yang-Baxter equation describes a relationship between functions, namely \[(R\otimes 1)(1\otimes R)(R\otimes 1) = (1 \otimes R)(R\otimes 1)(1\otimes R),\] where $R\in \End(V\otimes V)$ for some finite dimensional vector space $V$. Interesting solutions of this equation are difficult to find, so an easier problem was introduced by Drin'feld  in \cite{Drinfeld92}: that of finding {\it set-theoretic solutions}, where $V$ is replaced by a finite set $B$, and tensor product replaced by the cross product. Given such a set-theoretic solution on $B$, one obtains a solution for a vector space with $B$ as a basis. 

Many interesting example of set-theoretic solutions start to appear when $B$ carries some additional structure. For example, if $B=(B,\cdot)$ is an abelian group then Rump \cite{Rump07} showed how to construct solutions to the Yang-Baxter equation by giving $B$ the structure of a {\it left brace}, where $(B,\cdot)$ is endowed with another binary operation $\circ$ such that $(B,\circ)$ is a group and a certain relation holds. The corresponding solutions to the equation were both {\it non-degenerate} (see section \ref{YBEsec}) and {\it involutive} (i.e., self-inverse). 

Later, the concept of left brace was generalized by Guarnieri and Vendramin \cite{GuarnieriVendramin17} to the case where $(B,\cdot)$ is nonabelian, obtaining a {\it (left) skew brace}. Skew braces correspond to non-degenerate solutions of the Yang-Baxter equations which are not non-involutive, though one can easily compute the inverse to any solution \cite{KochTruman20b}.

Thus, identifying set-theoretic solutions to the Yang-Baxter equation reduces to finding skew braces. But skew braces may not be easy to find: they involve putting two  (potentially) different group structures on $B$ that interact with each other in a precise way. A breakthrough was obtained in 2016 by Bachiller \cite{Bachiller16}, who observed a correspondence between skew braces and Hopf-Galois structures on Galois field extensions, a subject which had been studied for 30 years by numerous people--see, e.g., \cite{Kohl98}, \cite{CarnahanChilds99}, \cite{Childs00}, \cite{Byott04b}. The classification of Hopf-Galois structures on a Galois extension with Galois group $ G $ follows from a theorem of Greither and Pareigis, and amounts to finding regular subgroups $N$ of $\Perm(G)$, satisfying a certain stability condition. Given $N\le\Perm(G)$ as above, a skew brace can be constructed where one of the underlying groups is isomorphic to $N$, the other to $G$. 

But Hopf-Galois theory still involves two groups interacting in a very precise way. In 2013, Childs \cite{Childs13} developed a technique using {\it fixed point free abelian  endomorphisms} (definition \ref{fpf} below) of a group $G$ to construct a certain class  of suitable regular subgroups, hence of Hopf-Galois structures.

In this work, we use Childs's theory to construct explicit solutions to the Yang-Baxter equation. If $\psi:G\to G$ is a fixed point free abelian endomorphism, then one can obtain a regular subgroup of $\Perm(G)$. Here, we construct the skew  brace $\B_{\psi}$ associated to $\psi$, and then we find a solution to the Yang-Baxter equation based on $\B_{\psi}$. We then use the theory of opposite braces in \cite{KochTruman20} to develop a second solution, namely, the inverse to the first solution. In short, we prove the following.

\begin{thm}[Theorem \ref{main}]
	Let $G$ be a finite group.
	\begin{enumerate}
		\item Let $\psi:G\to G$ be a fixed point free abelian endomorphism. Let $R_{\psi},R'_{\psi}:G\times G \to G \times G$ be given by
		\begin{align*}
		R_{\psi}(g,h)&=\left(\psi(g^{-1})h\psi(g),\psi(hg^{-1})h^{-1}\psi(g)g\psi(g^{-1})h\psi(gh^{-1}\right))\\
		R_{\psi}'(g,h)&=\left(g\psi(g^{-1})h\psi(g)g^{-1},\psi(h)g\psi(h^{-1})\right)
		\end{align*}
		Then:
		\begin{enumerate}
			\item Both $R_{\psi}$ and $R'_{\psi} $ are non-degenerate, set-theoretic solutions to the Yang-Baxter equation.
			\item The functions $R_{\psi}$ and $R'_{\psi}$ are inverse to each other.
		\end{enumerate} 
		\item Two fixed point free abelian endomorphisms $\psi_1,\psi_2:G\to G$ give {\it equivalent} solutions to the Yang-Baxter equation $R_{\psi_1},\;R_{\psi_2}$ if and only if $\psi_2=\varphi\psi_1\varphi^{-1}$ for some $\varphi\in\Aut(G)$.
	\end{enumerate} 
\end{thm}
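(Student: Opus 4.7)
The plan is to route everything through the skew brace $\B_\psi$ associated to a fixed point free abelian endomorphism $\psi$, as foreshadowed in the introduction. First I would give an explicit description of $\B_\psi$: its underlying set is $G$, its additive operation is the original group law, and its circle operation $\circ$ is defined in terms of $\psi$ (extracted from the regular subgroup of $\Perm(G)$ produced by Childs's construction). The preliminary step is to verify the skew brace axioms for $(G,\cdot,\circ)$; this should be a direct computation using the endomorphism property of $\psi$ together with the fact that elements of the image $\psi(G)$ commute pairwise (the ``abelian'' condition), while the fixed point free hypothesis is exactly what ensures that $\circ$ is a group operation on $G$.

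Given $\B_\psi$, part (1)(a) becomes immediate from the Guarnieri--Vendramin theorem: any skew brace produces a non-degenerate set-theoretic YBE solution by a standard formula in the two operations. Unpacking this formula using the explicit description of $\circ$ should yield precisely the claimed $R_\psi$; similarly, applying the formula to the opposite brace $\B_\psi^{\mathrm{op}}$ should produce $R'_\psi$. Part (1)(b) then follows from the result in \cite{KochTruman20} that the solution arising from $\B^{\mathrm{op}}$ is the inverse of the solution from $\B$. Both reductions convert the Yang--Baxter axiom and the inverse property, which would be painful to verify directly for the stated formulas, into clean invocations of known brace-theoretic results.

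For part (2), equivalence of YBE solutions attached to skew braces corresponds to isomorphism of the underlying skew braces. The forward direction is quick: if $\psi_2 = \varphi\psi_1\varphi^{-1}$ for some $\varphi\in\Aut(G)$, then $\varphi$ preserves the $\cdot$-operation tautologically and preserves $\circ$ by a short verification using the conjugation relation, so $\varphi\times\varphi$ intertwines $R_{\psi_1}$ and $R_{\psi_2}$. The main obstacle is the converse: given a bijection $f:G\to G$ equivalencing the two solutions, I must show $f\in\Aut(G)$ and $\psi_2 = f\psi_1 f^{-1}$. I would argue that the intertwining at the level of $R$ forces $f$ to be a skew brace isomorphism $\B_{\psi_1}\to\B_{\psi_2}$, hence in particular a group automorphism of $(G,\cdot)$, and then that $\psi$ is recoverable from the brace structure of $\B_\psi$ in such a way that the brace isomorphism translates into the required conjugation identity. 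This recoverability-of-$\psi$ step is where I expect the real work to lie.
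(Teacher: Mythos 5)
Your treatment of part (1) follows essentially the same route as the paper: build the skew brace $\B_\psi=(G,\cdot,\circ)$ with $g\circ h=g\psi(g^{-1})h\psi(g)$, invoke the standard brace-to-YBE formula for non-degenerate solutions to get $R_\psi$, apply it to the opposite brace to get $R'_\psi$, and quote \cite[Th.\ 4.1]{KochTruman20} for the inverse property. The only cosmetic difference is that you verify the skew brace axioms directly, whereas the paper gets them for free from Childs's regular $G$-stable subgroup $N_\psi=\{\lambda(g)\rho(\psi(g))\}$ and the regular-subgroup/brace correspondence; either works, and the paper's route also supplies regularity (equivalently, bijectivity of $g\mapsto g\psi(g^{-1})$), which is exactly where fixed point freeness enters.

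Part (2) is where your plan has a genuine gap. First, the paper's notion of equivalence (Section \ref{bracesec}) is \emph{defined} at the level of braces: two solutions are equivalent when the underlying regular subgroups produce isomorphic braces, i.e.\ when the change of variables is a brace isomorphism. You instead take equivalence to mean an arbitrary bijection intertwining the two solutions and propose to show such a bijection must be a brace isomorphism; that step is false in general (for the trivial brace the solution is the flip $(g,h)\mapsto(h,g)$, which every bijection of $G$ intertwines), so under your reading the reduction already breaks. Second, even granting a brace isomorphism $f:\B_{\psi_1}\to\B_{\psi_2}$, your ``recoverability of $\psi$'' step does not yield $\psi_2=f\psi_1f^{-1}$ on the nose: writing out $f(g\circ_1 h)=f(g)\circ_2 f(h)$ and cancelling gives only that $f(\psi_1(g))\,\psi_2(f(g))^{-1}$ is central for every $g$, i.e.\ $\psi$ is determined by the brace only up to central modifications (this is exactly the ambiguity in \cite[Th.\ 2]{Childs13}, where distinct $\psi$'s differing by a central-valued $\zeta$ give the same regular subgroup and hence the same brace). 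Closing this requires the finer classification result the paper actually cites, namely \cite[\S 5, Cor.\ 7.2]{KochTruman20b}, which says that regular subgroups of the form $N_\psi$ give isomorphic braces precisely when the endomorphisms are $\Aut(G)$-conjugate (and that the statement passes to opposites, which handles the pair $R_\psi,R'_\psi$). So the forward direction of your (2) is fine, but the converse as sketched would not go through without importing that external input or reproving it, including an argument that disposes of the central-image ambiguity.
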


In section \ref{bracesec}, we define precisely the concepts of {\it type} of solution and {\it equivalence} of solutions.

The significance of this result lies in its simplicity. Although the proof presented here uses the Greither-Pareigis correspondence for Hopf-Galois structures on separable field extensions and the theory of skew braces, the reader can generate pairs of solutions to the Yang-Baxter equation without a deep understanding of these concepts: one simply needs to be familiar with elementary group theory. 

We start by describing, in greater detail, the Yang-Baxter equation. Then, we give an overview of Greither-Pareigis theory, which we then connect to skew braces which, in turn, give us set-theoretic solutions to the Yang-Baxter equation. Next, we discuss fixed-point free abelian endomorphisms on a finite group $G$ and we explicitly construct a brace corresponding to such an endomorphism. We then prove the main result above (with some minor differences in notation) in Section 5. Finally, we provide examples in which $G$ is dihedral, alternating, symmetric, or metacyclic.

Unless otherwise specified, we assume all groups (and braces) are finite.

\section{The Yang-Baxter Equation and its Solutions}\label{YBEsec}

Let $V$ be a finite dimensional vector space over some field $K$. A {\it solution} to the Yang-Baxter equation is a $K$-linear map $R:V\otimes V\to V\otimes V$ such that 
\[(R\otimes I)(I\otimes R)(R\otimes I)=(I\otimes R)(R\otimes I)(I\otimes R): V\otimes V \otimes V\to V\otimes V \otimes V,\]
where $I:V\to V$ is the identity map. If we view $R\in M_{n^2}(K)$ where $\dim_K V=n$ then $R$ is what is called an {\it $R$-matrix}. As a way to develop non-trivial examples of solutions, in \cite{Drinfeld92}, Drin'feld suggests considering set-theoretic solutions to the YBE, that is, a set $B$ and a function $ R: $ $B\times B\to B\times B$ such that
\[(R\times \mathrm{id})(\mathrm{id}\times R)(R\times \mathrm{id}) = (\mathrm{id}\times R)(R\times \mathrm{id})(\mathrm{id}\times R)\]
holds, where $\mathrm{id}$ is the identity map on $B$. If $B$ is finite, we let $V_B$ be the $K$-vector space with basis $B$; then a set theoretic solution on $B$ gives a vector space solution on $V_B$. 

A set-theoretic solution $R:B\times B\to B\times B$ is said to be {\it non-degenerate} if the functions $f_y,g_x:B\to B$ given by $ R(x,y)=(f_y(x),g_x(y))$ are invertible. As above, we say $R$ is {\it involutive} if $\mathrm{id}$. 

In the work being presented here, the set $B$ will carry the structure of a (finite)  group, say $G$, and the set-theoretic solutions will rely heavily on the group operation. Working with $G$ will allow us to identify $V_G$ naturally with $K[G]$, thereby naturally obtaining $R$-matrices on group algebras.

\section{Regular Subgroups, Braces, and Solutions to the YBE}\label{bracesec}

In this section, we introduce many of the structures needed to prove theorem \ref{main}. We start with the definition of regular $G$-stable subgroups of $\Perm(G)$, which give rise to skew braces, which in turn give rise to solutions to the Yang-Baxter equation.

Let $G$ be a group, and denote by $\Perm(G)$ the group of all permutations of $G$. We say a subgroup $N$ of $\Perm(G)$ is {\it regular} if the action of $N$ on $G$ is free and transitive, that is
\begin{enumerate}
	\item for $\eta\in N, g\in G$ we have $\eta[g]=g$ if and only if $\eta=1_N$; and
	\item for $g,h\in G$ there exists an $\eta\in N$ such that $\eta[g]=h$.
\end{enumerate}

If $N\le\Perm(G)$ is regular then $|N|=|G|$. Additionally, if $N\le \Perm(G)$ with $|N|=|G|$ and either of the above two conditions are satisfied, then $N$ is a regular subgroup. Two fundamental examples of regular subgroups are the images of the left regular representation $\lambda: G \hookrightarrow \Perm(G)$ and the right regular representation $\rho:G\hookrightarrow \Perm(G)$.

In general, it is not necessarily the case that $N\cong G$, however in the examples we will consider here these groups will in fact be isomorphic. 

The subgroup $\lambda(G)\le\Perm(G)$ acts on $\Perm(G)$ by conjugation. We say $N\le\Perm(G)$ is {\it $G$-stable} if this action restricts to an action on $N$. In other words, for all $\eta\in N,g\in G$ we have 
\[^g\eta:=\lambda(g)\eta\lambda(g^{-1})\in N.\]

Certainly, $\lambda(G)$ is $G$-stable, and since $\lambda(G)$ and $\rho(G)$ commute in $\Perm(G)$ we see that $\rho(G)$ is $G$-stable as well.

Subgroups of $\Perm(G)$ which are regular and $G$-stable are essential in identifying Hopf-Galois structures on Galois extensions $L/K$ with $\Gal(L/K)\cong G$. This was first discovered by Greither and Pareigis in \cite{GreitherPareigis87}, but will not be needed here.

We will now introduce the notion of a skew (left) brace, and connect it to the regular, $G$-stable subgroups above. 

A {\it skew left brace} is a triple $\B=(B,\cdot,\circ)$ where $(B,\cdot)$ and $(B,\circ)$ are groups; and for all $a,b,c\in B$ the following condition holds:
\[a\circ(b\cdot c) = (a\circ b)\cdot a^{-1} \cdot (a\circ c),\]
where $a^{-1}$ is the inverse to $a$ in $(B,\cdot)$. We call this condition the {\it brace relation}.

For simplicity, we adopt the following conventions.
\begin{enumerate}
	\item We will refer to $\B$ simply as a {\it brace}. The term ``brace'' was first used by Rump in \cite{Rump07} to describe a triple $(B,\cdot,\circ)$ satisfying the brace relation where $(B,\cdot)$ is abelian. The notion of brace was generalized 10 years later by Guarnieri and Vendramin \cite{GuarnieriVendramin17}.
	\item When no confusion will arise we will suppress the dot and write $ab=a\cdot b$.
	\item While $(B,\cdot)$ and $(B,\circ)$ share the same identity element, their inverses are different in general: we will denote the inverse to $a\in(B,\circ)$ by $\overline{a}$.
\end{enumerate}

The reader should be made aware that there is no universal notation for braces at this point. Not only are the symbols for the two operation not standard, the order in which they appear in the triple is not standard as well. 

Simple examples of braces include $(G,\cdot, \cdot)$, called the {\it trivial brace on $G$}, and $(G,\cdot,\cdot')$ where $(G,\cdot')$ is the opposite group to $(G,\cdot)$, giving the {\it almost trivial brace on $G$}.

As with regular, $G$-stable subgroups of $\Perm(G)$, we do not require $(B,\cdot)\cong (B,\circ)$, however this paper will only deal with braces satisfying this property.

A regular, $G$-stable subgroup $N\le \Perm(G)$ gives rise to a brace as follows. The map $\varkappa:N\to G$ given by $\varkappa(\eta)=\eta(1_G)$ is a bijection. Define  $\B(N)=(N,\cdot,\circ)$, where $\cdot$ is the usual group operation on $N$, and 
\[\eta\circ \pi = \varkappa^{-1}(\varkappa(\eta)\ast_G \varkappa(\pi)),\;\eta,\pi\in N.\]

It is well-known that every brace $(B,\cdot,\circ)$ arises from a regular, $G$-stable subgroup $N\le\Perm(G)$ with $G\cong (B,\circ)$ \cite[Prop. 6.1]{Bachiller16}. In general, the correspondence is not one-to-one \cite[Appendix A]{GuarnieriVendramin17}.

Braces give set-theoretic solutions to the Yang-Baxter equation. For $\B=(B,\cdot,\circ)$ a brace, define $R_{\B}:B\times B\to B\times B$ by
\[R_{\B}=(a^{-1}(a\circ b),\overline{a^{-1}(a\circ b)}\circ a \circ b) \label{YBE}]\tag{1}.\]
Then $R_{\B}$ is a non-degenerate, set-theoretic solution to the YBE. 

\begin{definition}
	Let $\B=(B,\cdot,\circ)$ be a brace. Then $R_{\B}$ is said to be a solution of {\it type $(N,G)$} if $N$ and $G$ are abstract groups with $N\cong(B,\cdot)$ and $G\cong (B,\circ)$. If $N=G$ then we say $R_{\B}$ is a solution of {\it type $G$}. 
\end{definition}

If $(B,\cdot)$ is abelian, then $R_{\B}$ is involutive, i.e., $R_{\B}$ is self-inverse. On the other hand, if $(B,\cdot)$ is non-abelian, two of the authors in \cite[Th. 4.1]{KochTruman20} construct the inverse to $R_{\B}$ through the use of opposite braces. The inverse to $R_{\B}$, denoted $R'_{\B}$, is simply
\[R'_{\B}=((a\circ b)a^{-1},\overline{(a\circ b)a^{-1}}\circ a \circ b). \tag{2}\label{YBEopp}\]
 In this case, $R'_{\B}$ is a solution of the same type as $R_{\B}$. Notice that $R'_{\B}= R_{\B'}$, i.e., it is the solution obtained from \ref{YBE} using the opposite brace to $\B$.

It should be pointed out that braces can give additional set-theoretic solutions. For example, suppose $\phi:\B\to \B$ is a brace isomorphism, i.e., a bijective map which preserves both operations, and $R$ is any solution to the Yang-Baxter equation with underlying set $B$. Then $R_{\phi}:=R(\phi\times\phi):B \times B\to B\times B$ is given by
\begin{align*}
R_{\phi} (a,b) &= (\phi(a)^{-1}(\phi(a)\circ \phi(b)),\overline{\phi(a)^{-1}(\phi(a)\circ \phi(b))}\circ \phi(a) \circ \phi(b))\\
 &= (\phi(a^{-1}(a\circ b)),\phi(\overline{a^{-1}(a\circ b)}\circ a \circ b)),
\end{align*}
so $R(\phi\times\phi)=(\phi\times\phi)R$ and the Yang-Baxter equation is clearly satisfied. However, if one is interested in set-theoretic solutions as a means to find vector space solutions, then $R_{\phi}$ and $R$ can be thought of as being the same solution under a change of basis. Thus if $N_1$ and $N_2$ are regular, $G$-stable subgroups which produce the same brace (up to isomorphism), we think of them as giving {\it equivalent} solutions to the Yang-Baxter equation.

\section{Fixed-Point Free Abelian Endomorphisms and Braces}

The purpose of this section is to explicitly show how to obtain a brace from a fixed point free abelian endomorphism. 

We start with a formal definition of what it means for a an endomorphism to be fixed point free abelian. The term is self-explanatory, however since these endomorphisms are central to this work we wish to be explicit. 

\begin{definition}\label{fpf}
	Let $G$ be a group. An endomorphism $\psi:G\to G$ is said to be {\it fixed point free abelian} if both of the following hold:
	\begin{enumerate}
		\item If $\psi(g)=g$ then $g=1_G$. (Fixed point free)
		\item The image $\psi(G)$ is abelian. (Abelian)
	\end{enumerate}
\end{definition}

Observe that if $\psi$ is an abelian endomorphism, then $\psi(ghg^{-1})=\psi(h)$ for all $g,h\in G$. That is, $\psi$ is constant on conjugacy classes. We will use this fact extensively in section \ref{exSec} when constructing these maps.

For a given group $G$, we will denote by $\FPF(G)$ the set of all fixed point free abelian endomorphisms of $ G $. If $\psi_1,\psi_2\in\FPF(G)$ their composition is typically not fixed point free, so $\FPF(G)$ should be viewed as simply a pointed set, base point $\psi_0$.

\begin{example}\label{triv0}
	Let $G$ be any group. Then the trivial map $\psi_0:G \to G,\; \psi_0(g)=1_G$ is fixed point free abelian.
\end{example}

\begin{example}\label{D40}
		Let $G=D_4=\gen{r,s:r^4=s^2=rsrs=1_G}$ be the dihedral group of order $8$. Define $\psi:G\to G$ by $\psi(r) = \psi(s) = rs$. Since $\psi(G)=\gen{rs}$ this map is clearly abelian. Since the image of $\psi$ contains only one nontrivial point, the only possible fixed point is $rs$; since $\psi(rs)=(rs)^2=1_G$ we get that $\psi$ is fixed point free, hence $\psi\in\FPF(G)$. 
		
		This example is well-known: see \cite[p. 1264, (6)]{Childs13}.
\end{example}

In \cite{Childs13} Childs constructs, from $\psi\in\FPF(G)$, a regular $G$-stable subgroup of $\Perm(G)$, namely $N=\{\lambda(g)\rho(\psi(g)):g\in G\}$. Explicitly,
\[\eta_g[h]=gh\psi(g^{-1}) {\mbox{ for all $ g,h \in G $}}.\]
The reader can check that $N$ is regular, and that $^k\eta_g = \eta_{kgk^{-1}}$ for all $g,k\in G$. 

Different fixed point free endomorphisms can yield the same group $N$. Indeed,  $\psi_1$, $\psi_2\in\FPF(G)$ produce the same regular, $G$-stable subgroup of $\Perm(G)$ if and only if there is a $\zeta\in\FPF(G)$ with $\zeta(G)\le Z(G)$ and such that
\[ \psi_2(g)=\psi_1(g\zeta(g^{-1}))\zeta(g),\;g\in G.\]
In particular, $\psi\in\FPF(G)$ gives the same regular, $G$-stable subgroup of $\Perm(G)$ as $\psi_0$--namely, $\lambda(G)\le\Perm(G)$--if and only if $\psi(G)\le Z(G)$.
 See \cite[Th. 2]{Childs13} for details.

Thus, $N$ gives rise to a brace $\B_{\psi}=(N,\cdot,\circ)$ where the dot operation is the usual operation in $N$, which itself is inherited from $G$: since $\lambda(G)$ and $\rho(G) $ commute in $\Perm(G)$ we have
\[\eta_g\eta_h = \lambda(g)\rho(\psi(g))\lambda(h)\rho(\psi(h)) = \lambda(g)\lambda(h)\rho(\psi(g))\rho(\psi(h))=\lambda(gh)\rho(\psi(gh))=\eta_{gh}.\]
 We will now explicitly compute the circle operation, completing the description of $\B_{\psi}$. Note that the bijection $\varkappa:N\to G$ is $\varkappa[\eta_g] = \eta_{g}[1_G] = g\psi(g^{-1})$. For $g,h\in G$ we have

\begin{align*}
\eta_g\circ \eta_h &= \varkappa^{-1}(\varkappa(\eta_g)\ast_G \varkappa(\eta_h))\\
&= \varkappa^{-1}\big(g\psi(g^{-1})h\psi(h^{-1})\big).
\end{align*}

On the other hand, since $\psi$ is constant on conjugacy classes,
\begin{align*}
\varkappa\big(\eta_{g\psi(g^{-1})h\psi(g)}\big) &= g\psi(g^{-1})h\psi(g)\psi\big((g\psi(g^{-1})h\psi(g))^{-1}\big)\\
&=g\psi(g^{-1})h\psi(g)\psi(h^{-1}g^{-1}) \\
&=g\psi(g^{-1})h\psi(h^{-1}), 
\end{align*}
hence
\[\eta_g\circ \eta_h = \eta_{g\psi(g^{-1})h\psi(g)}. \]

At this point we remark that the map $g\mapsto \eta_g:G\to (N,\cdot)$ is in fact an isomorphism. This allows us to think of the underlying set of our brace as $G$, obtaining:
\begin{proposition}
	Let $\psi:G \to G$ be a fixed point free abelian endomorphism. Then $\B_{\psi}=(G,\cdot,\circ)$ is a brace, where
	\[g\circ h = g\psi(g^{-1})h\psi(g),\;g,h\in G.\]
\end{proposition}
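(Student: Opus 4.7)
The plan is to transport the brace structure already established on $N\le\Perm(G)$ to $G$ through the bijection $\phi\colon G\to N$, $\phi(g)=\eta_g$. Essentially all of the substantive computation has been carried out in the preceding paragraphs of the section, so the remaining work amounts to bookkeeping.

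First, I would invoke the general principle recalled in Section~\ref{bracesec}: because $N$ is a regular, $G$-stable subgroup of $\Perm(G)$, the triple $(N,\cdot,\circ)$ is a brace, where $\cdot$ is the usual composition in $N$ and $\circ$ is pulled back from $G$ via $\varkappa$. The identity $\eta_g\cdot\eta_h=\eta_{gh}$ derived above shows that $\phi$ is a group isomorphism $(G,\cdot)\to(N,\cdot)$, and the identity $\eta_g\circ\eta_h=\eta_{g\psi(g^{-1})h\psi(g)}$ shows that if we define the second operation on $G$ by $g\circ h:=g\psi(g^{-1})h\psi(g)$, then $\phi$ is equally an isomorphism $(G,\circ)\to(N,\circ)$. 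In particular, $(G,\circ)$ inherits a group structure (with the same identity $1_G$ and with inverse $\overline{g}=\varkappa(\phi(g)^{-1})$).

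Second, since the brace relation is an identity involving only the two operations on the underlying set, it is preserved by any bijection that simultaneously intertwines both. As $\phi$ does precisely this, the brace relation holding on $(N,\cdot,\circ)$ transfers to $(G,\cdot,\circ)$, establishing that $\B_{\psi}$ is a brace. If a self-contained check is preferred, one may alternatively verify the brace relation $a\circ(bc)=(a\circ b)\cdot a^{-1}\cdot(a\circ c)$ directly from the formula; the only algebraic input required is the cancellation $\psi(a)\psi(a^{-1})=1_G$ that arises in the middle of the expanded right-hand side, and no use of the abelian or fixed-point-free hypotheses is needed at this final step.

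I do not anticipate any real obstacle: the hard work --- exhibiting $N$ as a regular, $G$-stable subgroup of $\Perm(G)$ and using the hypothesis that $\psi(G)$ is abelian (equivalently, that $\psi$ factors through $G^{\mathrm{ab}}$) to simplify $\psi\bigl((g\psi(g^{-1})h\psi(g))^{-1}\bigr)$ to $\psi(h^{-1}g^{-1})$ in the identification of $\eta_g\circ\eta_h$ --- is already complete in the text preceding the statement.
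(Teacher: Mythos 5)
Your proposal is correct and is essentially the paper's own (largely implicit) argument: the paper likewise observes that $g\mapsto\eta_g$ is an isomorphism onto $(N,\cdot)$ and transports the brace $\B(N)=(N,\cdot,\circ)$, whose circle operation was computed as $\eta_g\circ\eta_h=\eta_{g\psi(g^{-1})h\psi(g)}$ using that $\psi$ is constant on conjugacy classes, to the underlying set $G$. One inessential nit: your parenthetical formula $\overline{g}=\varkappa(\phi(g)^{-1})$ for the $\circ$-inverse is not right (it yields $g^{-1}\psi(g)$ rather than the correct $\overline{g}=\psi(g)g^{-1}\psi(g^{-1})$ of Corollary \ref{thecor}), but the transport argument only needs existence of inverses, so this does not affect the proof.
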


As mentioned earlier, the two groups in a brace need not be isomorphic, however for braces constructed from fixed point free abelian maps, it is necessary that they are.

\begin{corollary}
	Let $\psi\in\FPF(G)$, and let $B_{\psi}=(G,\cdot,\circ)$ be its corresponding brace. Then $(G,\circ)\cong (G,\cdot)$.
\end{corollary}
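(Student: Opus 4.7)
The plan is to exhibit an explicit isomorphism $\phi : (G,\circ) \to (G,\cdot)$ by the formula $\phi(g) = g\psi(g^{-1})$. The motivation is not hard to find: the text has already observed that $g \mapsto \eta_g$ is an isomorphism $G \to (N,\cdot)$, and by construction of $\circ$ the map $\varkappa : N \to G$ is an isomorphism $(N,\circ) \to G$. Composing these gives $g \mapsto \eta_g \mapsto \varkappa(\eta_g) = \eta_g[1_G] = g\psi(g^{-1})$, which is a bijective correspondence $(G,\circ) \to (G,\cdot)$ once we verify the two ingredients directly on $G$.

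To check that $\phi$ is a homomorphism I would compute
\[
\phi(g\circ h) = (g\circ h)\,\psi\bigl((g\circ h)^{-1}\bigr) = g\psi(g^{-1})h\psi(g)\cdot \psi\bigl(\psi(g^{-1})h^{-1}\psi(g)g^{-1}\bigr),
\]
using the formula $g\circ h = g\psi(g^{-1})h\psi(g)$. Since $\psi$ is a homomorphism and $\psi(G)$ is abelian, the inner $\psi(\cdots)$ expands to a product of commuting elements in which $\psi^2(g^{-1})\psi^2(g)$ cancels, leaving $\psi(h^{-1})\psi(g^{-1})$. The remaining expression is $g\psi(g^{-1})h\psi(g)\psi(h^{-1})\psi(g^{-1})$, and commuting within the abelian image again collapses $\psi(g)\psi(g^{-1}) = 1_G$ (after moving $\psi(h^{-1})$ past $\psi(g^{-1})$) to yield $g\psi(g^{-1})\cdot h\psi(h^{-1}) = \phi(g)\phi(h)$. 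The abelian image is used in exactly this step.

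For injectivity, if $\phi(g) = \phi(h)$ then $h^{-1}g = \psi(h^{-1})\psi(g) = \psi(h^{-1}g)$, so $h^{-1}g$ is a fixed point of $\psi$; the fixed-point free hypothesis forces $h^{-1}g = 1_G$, hence $g = h$. Since $G$ is finite, $\phi$ is then bijective. I do not expect any genuine obstacle: the whole content is that the two hypotheses on $\psi$ — abelian image and fixed-point freeness — enter in exactly one place each (homomorphism and injectivity respectively), and the construction of the brace has already done most of the work, so the corollary really just records the isomorphism $\varkappa$ transported through the identification $G \cong (N,\cdot)$.
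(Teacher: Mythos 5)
Your proposal is correct and follows essentially the same route as the paper: it uses the same map $\phi(g)=g\psi(g^{-1})$, verifies the homomorphism property using the abelian image exactly as in the paper's computation, and deduces injectivity from fixed-point freeness (you check $\phi(g)=\phi(h)\Rightarrow g=h$ directly, while the paper argues via the kernel, which is the same argument). No gaps; the finiteness remark at the end correctly supplies surjectivity.
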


\begin{proof}
	Define $\phi:(G,\circ)\to (G,\cdot)$ by $\phi(g)=g\psi(g^{-1})$. We claim that $\phi$ is an isomorphism.  Indeed, we have
	\begin{align*}
	\phi(g\circ h)&= \phi( g\psi(g^{-1})h\psi(g))\\
	&= g\psi(g^{-1})h\psi(g)\psi(\psi(g^{-1})h^{-1}\psi(g)g^{-1})\\
	&=g\psi(g^{-1})h\psi(h^{-1})\tag{$\psi$ is abelian}\\
	&=\phi(g)\phi(h)
	\end{align*}
	so $\phi$ is a homomorphism. If $g\in \ker \phi$ then $\phi(g)=g\psi(g^{-1})=1_G$, from which it follows that $g=\psi(g)$. As $\psi$ is fixed point free, this means $g=1_G$.
Thus, $\phi$ is injective, hence an isomorphism.
\end{proof}

Direct calculation establishes the following, which will be needed in the sequel.
\begin{corollary}\label{thecor}
	Let $\psi\in\FPF(G)$. Then for all $g\in G$ we have
	\[\overline{g}=\psi(g)g^{-1}\psi(g^{-1}).\]
\end{corollary}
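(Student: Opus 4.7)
The plan is to verify directly that the proposed element acts as a two-sided inverse for $g$ under $\circ$. Since $(G,\circ)$ is a group, it suffices to check one side: if $h := \psi(g)g^{-1}\psi(g^{-1})$ satisfies $g \circ h = 1_G$, then $h = \overline{g}$ automatically. (Alternatively one could check $h \circ g = 1_G$; the calculation is essentially symmetric.)

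The key step is the substitution into the brace formula $g \circ h = g\,\psi(g^{-1})\,h\,\psi(g)$. Plugging in $h = \psi(g)g^{-1}\psi(g^{-1})$ gives
\[
g \circ h \;=\; g\,\psi(g^{-1})\,\psi(g)\,g^{-1}\,\psi(g^{-1})\,\psi(g).
\]
Then the only thing to invoke is the fact that $\psi$ is a group homomorphism, so $\psi(g^{-1})\psi(g) = \psi(g^{-1}g) = \psi(1_G) = 1_G$. Applying this cancellation twice (to the two adjacent $\psi(g^{-1})\psi(g)$ pairs) collapses the expression to $g \cdot g^{-1} = 1_G$.

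There is essentially no obstacle here — this is a routine one-line verification that the authors flag with ``Direct calculation establishes.'' The only thing worth double-checking is that one uses the correct identity: namely the homomorphism property of $\psi$ on $g$ and $g^{-1}$, not any fixed-point-free or abelian-image hypothesis. Those stronger properties of $\psi$ are not required for the formula for $\overline{g}$ itself (they were needed earlier to establish that $\B_\psi$ is a brace at all, so that the notion of $\overline{g}$ makes sense). A brief remark could note that one can equally derive the formula from the isomorphism $\phi: (G,\circ) \to (G,\cdot)$, $\phi(g) = g\psi(g^{-1})$, from the preceding corollary: $\overline{g}$ is the unique element with $\phi(\overline{g}) = \phi(g)^{-1} = \psi(g)g^{-1}$, and one checks that $\phi\bigl(\psi(g)g^{-1}\psi(g^{-1})\bigr) = \psi(g)g^{-1}$ using the same homomorphism cancellation.
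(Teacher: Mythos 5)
Your proof is correct and matches the paper, which itself only asserts the identity by ``direct calculation'': you substitute $h=\psi(g)g^{-1}\psi(g^{-1})$ into $g\circ h=g\psi(g^{-1})h\psi(g)$, cancel via $\psi(g^{-1})\psi(g)=1_G$, and use that $(G,\circ)$ is a group so a one-sided inverse is the inverse. One small caveat on your side remarks: verifying $h\circ g=1_G$ (or the route through $\phi$) is not quite the same computation, since simplifying $\psi(h)$ and $\psi(h^{-1})$ there requires the abelian-image property (constancy on conjugacy classes), not just the homomorphism law; your main argument, however, avoids this entirely.
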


From \cite[\S 5]{KochTruman20b}, the group of automorphisms $\Aut(G)$ acts on $\FPF(G)$ via conjugation, i.e., $\varphi\psi\varphi^{-1}\in\FPF(G)$ whenever $\varphi\in\Aut(G),\;\psi\in\FPF(G)$. Furthermore, the braces formed by $\psi$ and $\varphi\psi\varphi^{-1}$ are isomorphic: in fact, any regular subgroup of $\Perm(G)$ which gives a brace isomorphic to the one determined by $\psi$ is of this form. 

\begin{definition}
	We say $\psi_1,\;\psi_2\in\FPF(G)$ are {\it brace equivalent} if $\psi_2=\varphi\psi_1\varphi^{-1}$ for some $\varphi\in\Aut(G)$.
\end{definition}

\begin{example}\label{triv}
	Let $G$ be any group, and let $\psi_0:G\to G$ be the trivial map as in example \ref{triv0}. Then 
	\[g \circ h = g \psi_0(g^{-1})h\psi_0(g)=gh\]
	and $\B_{\psi}=(G,\cdot,\cdot)$ is the trivial brace on $G$.
	
	Notice that for any $\varphi\in\Aut(G)$ we have 
	\[\varphi^{-1}\psi_0\varphi(g) = \varphi^{-1}(1_G)=1_G\]
	so $\varphi^{-1}\psi_0\varphi=\psi_0$. Thus, no nontrivial $\psi\in\FPF(G)$ is equivalent to $\psi_0$.
\end{example}

\begin{example}\label{D4}
	Let $G=D_4=\gen{r,s:r^4=s^2=rsrs=1_G}$ be the dihedral group of order $ 8 $?, and let $\psi_1\in\FPF(G)$ be $\psi_1(r) = \psi_1(s) = rs$ as in example \ref{D40}. The circle operation in $\B_{\psi_1}$ is given by
	\begin{align*}
	r^i\circ r^j &= r^i(rs)^ir^j(rs)^i=r^{i+(-1)^ij}\\
	r^i\circ r^js &= r^i(rs)^ir^js(rs)^i=r^{-i+(-1)^ij}s\\
	r^is\circ r^j&= r^i{s}(rs)^{i+1}r^j(rs)^{i+1} = r^{i+(-1)^ij}s \\
	r^is\circ r^js &= r^is(rs)^{i+1}r^js(rs)^{i+1}=r^{2-i+(-1)^ij}.\\
	\end{align*}
\end{example}

\begin{example}\label{other D4}
Let $G=D_4$ as above, and let $\psi_2 : G \to G$ be the endomorphism given by $\psi_2(r)=r^2s$, $\psi_2(s)=1$. Since $\psi_2(G)=\gen{r^2s}$ the map is abelian, and since $\psi(r^2s) = 1_G \ne r^2s$ we get $\psi_2\in\FPF(G)$. The circle operation in ${\B_{\psi_2}}$ is given by
\begin{align*}
r^i \circ r^j &= r^i(r^2s)^ir^j(r^2s)^i = r^{i + (-1)^ij}\\
r^i \circ r^js &= r^i(r^2s)^ir^js(r^2s)^i = r^{i + (-1)^ij}s\\
r^is \circ r^j &= r^is(r^2s)^ir^j(r^2s)^i = r^{i-(-1)^ij}s\\
r^is \circ r^js &= r^is(r^2s)^ir^js(r^2s)^i = r^{i - (-1)^ij}. 
\end{align*}
Now let $\varphi\in\Aut(G)$ be given by $\varphi(r)=r,\;\varphi(s)=r^3s$. Then
\begin{align*}
\varphi^{-1}\psi_1\varphi(r) &= \varphi^{-1}\psi_1(r) = \varphi^{-1}(rs) = r^2s =\psi_2(r)\\
\varphi^{-1}\psi_1\varphi(s) &= \varphi^{-1}\psi_1(r^3s) = \varphi^{-1}(1_G) = 1_G =\psi_2(s),
\end{align*}
so $\psi_2$ is equivalent to the $\psi_1$ found in example \ref{D4}.
\end{example}

\section{Proof Of The Main Result}

We are now well-positioned to prove our main theorem.

\begin{theorem}\label{main}
	Let $G$ be a finite group.
	\begin{enumerate}
		\item Let $\psi\in\FPF(G)$. Let $R_{\psi},R'_{\psi}:G\times G \to G \times G$ be given by
		\begin{align*}
		R_{\psi}(g,h)&=\left(\psi(g^{-1})h\psi(g),\psi(hg^{-1})h^{-1}\psi(g)g\psi(g^{-1})h\psi(gh^{-1}\right))\\
		R_{\psi}'(g,h)&=\left(g\psi(g^{-1})h\psi(g)g^{-1},\psi(h)g\psi(h^{-1})\right).
		\end{align*}
		Then:
		\begin{enumerate}
			\item Both $R_{\psi}$ and $R'_{\psi}$ are non-degenerate, set-theoretic solutions to the Yang-Baxter equation of type $G$.
			\item The functions $R_{\psi}$  and $R'_{\psi}$ are inverse to each other.
		\end{enumerate} 
	\item The maps $\psi_1,\psi_2\in\FPF(G)$ give equivalent pairs of solutions to the Yang-Baxter equation $R_{\psi_1},\;R_{\psi_2}$ if and only if $\psi_2=\varphi\psi_1\varphi^{-1}$ for some $\varphi\in\Aut(G)$.
	\end{enumerate} 
\end{theorem}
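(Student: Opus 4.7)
The plan is to specialize the brace-to-solution correspondence of Equation (\ref{YBE}) to the specific brace $\B_{\psi}=(G,\cdot,\circ)$ that was built in Section 4, where $g\circ h = g\psi(g^{-1})h\psi(g)$. Since the only work that remains is (i) computing the two coordinates of $R_{\B_\psi}$ and $R'_{\B_\psi}$ explicitly and showing they match the stated formulas, and (ii) invoking the isomorphism-theoretic content behind the definition of equivalence, the proof splits naturally into a short calculation and two appeals to the theory already developed.

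For the first coordinate of $R_\psi$, direct substitution gives $g^{-1}(g\circ h) = \psi(g^{-1})h\psi(g)$. For the second coordinate, I would set $u=\psi(g^{-1})h\psi(g)$ and use the fact (noted right after Definition \ref{fpf}) that $\psi$ is constant on conjugacy classes together with $u = \psi(g)^{-1} h \psi(g)$ to deduce $\psi(u)=\psi(h)$. Then Corollary \ref{thecor} yields a workable expression for $\overline{u}$, and the computation of $\overline{u}\circ(g\circ h)$ collapses using the abelianness of $\psi(G)$ to merge factors such as $\psi(h)\psi(g^{-1})=\psi(hg^{-1})$ and $\psi(g)\psi(h^{-1})=\psi(gh^{-1})$, producing the claimed expression $\psi(hg^{-1})h^{-1}\psi(g)g\psi(g^{-1})h\psi(gh^{-1})$. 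The formula for $R'_\psi$ is obtained in the same way from Equation (\ref{YBEopp}); in that derivation the conjugation collapse $\psi(g)\psi(g^{-1})=1_G$ inside the middle of the expression eliminates most of the terms, leaving only $\psi(h)g\psi(h^{-1})$.

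With the two formulas verified, assertions (1)(a) and (1)(b) are immediate consequences of general brace theory: Equation (\ref{YBE}) always yields a non-degenerate set-theoretic solution to the Yang-Baxter equation, and Equation (\ref{YBEopp}) yields its inverse by \cite[Th.\ 4.1]{KochTruman20}. That the solutions are of type $G$ follows from Corollary 4.3, where $(G,\circ)\cong (G,\cdot)$ was established. No separate calculation is required for (1)(b) because the inverse identity is what the opposite-brace construction is designed to express.

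For part (2), the forward direction is handed to us by the paper's remark following Corollary \ref{thecor}: if $\psi_2 = \varphi\psi_1\varphi^{-1}$, then $\B_{\psi_1}\cong \B_{\psi_2}$ as braces, and the paragraph containing $R_\phi := R(\phi\times\phi)$ explains that such a brace isomorphism precisely realizes equivalence of the associated Yang-Baxter solutions. The converse is the only step I expect to be delicate, since it demands that \emph{every} brace isomorphism $\B_{\psi_1}\cong\B_{\psi_2}$ arise from an element of $\Aut(G)$ conjugating $\psi_1$ to $\psi_2$; this converse is precisely the content cited from \cite[\S 5]{KochTruman20b} in the paragraph preceding Definition 4.5. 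Thus the main obstacle is bookkeeping rather than new mathematics, and part (2) follows from combining this converse with the equivalence-of-solutions definition.
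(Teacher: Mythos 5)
Your proposal is correct and follows essentially the same route as the paper: both specialize equation (\ref{YBE}) to the brace $\B_{\psi}$ using Corollary \ref{thecor} and the conjugacy-invariance of $\psi$ to get the stated formula for $R_{\psi}$, obtain $R'_{\psi}$ from the opposite brace via equation (\ref{YBEopp}) together with \cite[Th.~4.1]{KochTruman20}, and reduce part (2) to the cited classification from \cite{KochTruman20b}. The only minor difference is that the paper additionally invokes \cite[Cor.~7.2]{KochTruman20b} to cover equivalence of the \emph{pairs} $(R_{\psi_i},R'_{\psi_i})$, a point your write-up leaves implicit.
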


\begin{proof}
	Any $\psi\in\FPF(G)$ gives rise to a regular, $G$-stable subgroup $N_{\psi}$ of $\Perm(G)$, which in turn corresponds to the brace $\B_{\psi}=(G,\cdot,\circ)$ from before. The solution $R_{\psi}$ can be readily computed from equation (\ref{YBE}):
	\begin{align*}
	R_{\psi}(g,h) &=\big(g^{-1}(g\circ h),\overline{g^{-1}(g\circ h)}\circ g \circ h\big)\\
	&=\big(g^{-1}g\psi(g^{-1})h\psi(g),\overline{g^{-1}g\psi(g^{-1})h\psi(g)}\circ g \circ h\big) \tag{cor. \ref{thecor}} \\
	&=\left(\psi(g^{-1})h\psi(g),(\psi(\psi(g^{-1})h\psi(g)))(\psi(g^{-1})h^{-1}\psi(g))\psi(\psi(g^{-1})h^{-1}\psi(g))\circ g \circ h\right).
	\end{align*}
	The first component now agrees with the statement of the theorem. For the second component,
	\begin{align*}
     &(\psi(\psi(g^{-1})h\psi(g)))(\psi(g^{-1})h^{-1}\psi(g))\psi(\psi(g^{-1})h^{-1}\psi(g))\circ g \circ h\\&=\psi(h)\psi(g^{-1})h^{-1}\psi(g)\psi(h^{-1})\circ g\psi(g^{-1})h\psi(g)\\
     &=(\psi(hg^{-1})h^{-1}\psi(gh^{-1})\psi\big(\psi(gh^{-1})h\psi(hg^{-1})\big) g\psi(g^{-1})h\psi(g) \psi\big(\psi(hg^{-1})h^{-1}\psi(gh^{-1})\big)\\
     &=\psi(hg^{-1})h^{-1}\psi(gh^{-1})\psi(h)g\psi(g^{-1})h\psi(g)\psi(h^{-1})\\
     &=\psi(h\psi(h^{-1}))g\psi(g^{-1})h\psi(g\psi(h))
	\end{align*}
	as desired.

The solution $R_{\psi}'$ can be computed using the theory of opposites as in \cite{KochTruman20}. Given a brace $\B=(B,\cdot,\circ)$ with $(B,\cdot)$ nonabelian one can construct an opposite brace $\B'=(B,\cdot',\circ)$ where $a\cdot' b = b\cdot a$ for all $a,b\in B$. One then obtains a second solution to the YBE, namely
\[(g,h)\mapsto\big((g\circ h)g^{-1},\overline{(g\circ h)g^{-1}}\circ g \circ h\big).\]
One can show that the expression above reduces to $R_{\psi}'$ as in the statement of the theorem; that $R_{\psi}'$ is the inverse to $R_{\psi}$ can be found in \cite[Th. 4.1]{KochTruman20}. 

Finally, to show (2) we use the fact that two regular, $G$-stable subgroups are brace equivalent if and only if their opposites are brace equivalent \cite[Cor. 7.2]{KochTruman20b}.
\end{proof}

\begin{example}\label{trivYBE}
	For any nonabelian group $G$, let $\psi:G\to G$ be the trivial map as in example \ref{triv}. Then we get the trivial brace on $G$, and
	\begin{align*}
	R_{\psi}(g,h) &= \left(\psi(g^{-1})h\psi(g),\psi(hg^{-1})h^{-1}\psi(g)g\psi(g^{-1})h\psi(gh^{-1}\right)) = (h,h^{-1}gh)\\
	R_{\psi}'(g,h) &= \left(g\psi(g^{-1})h\psi(g)g^{-1},\psi(h)g\psi(h^{-1})\right)=(ghg^{-1},g).
	\end{align*}
\end{example}

\begin{example}
	As in example \ref{D4}, let $G=D_4=\gen{r,s:r^4=s^2=rsrs=1_G},\;\psi(r) = \psi(s) = rs$. Using theorem \ref{main} we see, e.g., $R_{\psi}(rs,r^2) = (r^2,rs)$. More generally,
	the values for the solution $R_{\psi}$ are given in the following table:

	\[\begin{array}{c|cccccccc}      & 1        & r          & r^2        & r^3        & s           & rs        & r^2s         & r^3s        \\\hline
1    & (1,1)    & (r, 1)     & (r^2, 1)   & (r^3, 1)   & (s,1)       & (rs,1)    & (r^2s,1)     & (r^3s,1)    \\
r    & (1,r)    & (r^3,r^3)  & (r^2,r)    & (r,r^3)    & (r^2s,r)    & (rs,r^3)  & (s,r)        & (r^3s,r^3)  \\
r^2  & (1,r^2)  & (r,r^2)    & (r^2,r^2)  & (r^3,r^2)  & (s,r^2)     & (rs,r^2)  & (r^2s,r^2)   & (r^3s,r^2)  \\
r^3  & (1,r^3)  & (r^3,r)    & (r^2,r^3)  & (r,r)      & (r^2s,r^3)  & (rs,r)    & (s,r^3)      & (r^3s,r)    \\
s    & (1,s)    & (r^3,s)    & (r^2,s)    & (r,s)      & (r^2s,r^2s) & (rs,r^2s) & (s,r^2s)     & (r^3s,r^2s) \\
rs   & (1,rs)   & (r,r^3s)   & (r^2,rs)   & (r^3,r^3s) & (s,r^3s)    & (rs,rs)   & (r^2s, r^3s) & (r^3s,rs)   \\
r^2s & (1,r^2s) & (r^3,r^2s) & (r^2,r^2s) & (r,r^2s)   & (r^2s,r^3s) & (rs,s)    & (s,s)        & (r^3s,s)    \\
r^3s & (1,r^3s) & (r,rs)     & (r^2,r^3s) & (r^3,rs)   & (s,rs)      & (rs,r^3s) & (r^2s,rs)    & (r^3s,r^3s) \end{array}\]  
A table for $R'_{\psi}$ can be similarly constructed using the formula as in theorem \ref{main}; alternatively one could compute it by explicitly constructing the inverse to the table of values above.
\end{example}

\section{Examples}\label{exSec}

We finish with examples, where we explicitly construct the solutions to the Yang-Baxter equation whose underlying sets are some well-known groups. We have tried to strike a balance between explicitness and simplicity of results.

Sometimes, the use of fixed point free abelian endomorphisms will find {\it all} solutions to the Yang-Baxter equation with underlying group $G$. We explore this topic in the examples below.

\subsection{Abelian Groups}
The theory presented above gives useful results when $G$ is nonabelian. In the case where $G$ is abelian, 
\[g\circ h = g\psi(g^{-1})h\psi(g) = gh\]
giving only the trivial brace. The only solution to the Yang-Baxter equation we construct is
\[R_{\psi}(a,b)=(b,a)\]
as $R'_{\psi}=R_{\psi}$ when $G$ is abelian.

By \cite{Byott96}, our construction will yield all possible solutions to the Yang-Baxter equation if and only if $\gcd(|G|,\phi(|G|))=1$, where $\phi$ is the Euler totient function. Such a group is necessarily cyclic.
	
\subsection{The Dihedral Groups $D_n$}

Generalizing the notation from example \ref{D4}, write $G=D_n=\gen{r,s:r^n=s^2=rsrs=1_G}$. Classification of $\FPF(D_n)$ appears in \cite[\S 5]{Childs13}. Here, we will give the Yang-Baxter solutions given by those maps.

Suppose first that $n$ is odd. Then $\FPF(D_n)$ consists only of the trivial map. Indeed, if $\psi\in\FPF(D_n)$ is nontrivial then $\ker\psi=\gen{r}$. Additionally, $\psi(s)=r^is$ for some $0\le i \le n-1$. But then $\psi(r^is)=r^is$ is a fixed point, which is a contradiction.

Thus, the two solutions to the Yang-Baxter equation we obtain when $n$ is odd are the two from example \ref{trivYBE}. 

By the main theorem in \cite{Kohl20}, the number of regular, $G$-stable subgroups of $D_n$ isomorphic to $n$ for $n$ odd is equal to the number of self-inverse elements in $\mathbb{Z}_n^{\times}$, from which it follows that our process obtains all such subgroups if and only if $p$ is a power of an odd prime.

Now suppose that $ n $ is even, and write $ n=2m $. 
Each element of $\FPF(D_n)$ which does not give the trivial regular subgroup $\lambda(G)$ is in at least one of the following four forms.
\begin{enumerate}
	\item $\alpha_i(r)=r^{2i}s,\;\alpha_i(s) = 1,\;0\le i\le m-1$;
	\item $\beta_i(r)=r^{2i+m}s,\;\beta_i(s) = r^m,\;0\le i\le m-1$; 
	\item $\gamma_i(r)=r^{2i+1}s,\;\gamma_i(s) = r^{2i+1}s,\;0\le i\le m-1$;
	\item $\delta_i(r)=r^{2i+1+m}s,\;\delta_i(s) = r^{2i+1}s,\;0\le i\le m-1$.
\end{enumerate}

For $0\le i,j \le n-1,\;\gcd(i,n)=1$ let $\varphi_{i,j}:D_n\to D_n$ be given by $\varphi_{i,j}(r)=r^i,\;\varphi_{i,j}(s)=r^js$. Then $\Aut(D_n)=\{\varphi_{i,j}\}$. If we set $i=1$ we have
\begin{align*}
\varphi_{1,j}^{-1}\alpha_0\varphi_{1,j}(r) &= \varphi_{1,j}^{-1}\alpha_0(r)=\varphi_{1,j}^{-1}(s)=r^{-j}s\\
\varphi_{1,j}^{-1}\alpha_0\varphi_{1,j}(s) &= \varphi_{1,j}^{-1}\alpha_0(r^js)=\varphi_{1,j}^{-1}(s^j),\\
\end{align*}
hence if $j$ is even we get $\varphi_{1,j}^{-1}\alpha_0\varphi_{1,j}=\alpha_{j-2}$, and if $j$ is odd we get $\varphi_{1,j}^{-1}\alpha_0\varphi_{1,j}(s)=r^{-j}s$ and so $\varphi_{1,j}^{-1}\alpha_0\varphi_{1,j}=\gamma_{(-j-1/2)}$. By allowing $j$ to vary we see that $\{\alpha_i,\gamma_j\}$ are all equivalent.

Using $\varphi_{1,j}$ it can also be shown that $\{\beta_i,\delta_j\}$ are all brace equivalent in a very similar manner. Furthermore, in the case $m$ is even the map $\zeta: D_n\to D_n$ given by $\zeta(r)=r^m,\;\zeta(s)=r^m$ is a fixed point free endomorphism whose image is the center of $D_n$. Additionally,
\begin{align*}
\alpha_0(r\zeta(r^{-1}))\zeta(r) &= \alpha_0(r^{1+m})r^m = s^{1+m}r^m=r^ms=\beta_0(r)\\
\alpha_0(s\zeta(s^{-1}))\zeta(s) &= \alpha_0(r^ms)r^m = s^{m}r^m = r^m=\beta_0(s),
\end{align*}
so there is only one nontrivial class. 

%

We have
\begin{align*}
R_{\alpha_0}(r^is^j,r^ks^{\ell})&=(s^ir^ks^{\ell},s^{k+i+\ell}r^{-k}s^ir^is^{i+j}r^ks^{j+k+\ell})\\
R_{\alpha_0}'(r^is^j,r^ks^{\ell})&=(r^is^{i+k}r^ks^{i+\ell+j}r^i,s^kr^is^{j+k})
\end{align*}
and for $m$ odd we have, since $r^m\in Z(D_n)$,
\begin{align*}
R_{\beta_0}(r^is^j,r^ks^{\ell})&=(s^ir^ks^{i+\ell},s^{i+k+\ell}r^{-k}s^ir^is^{i+j}r^ks^{i+k+\ell})\\
R'_{\beta_0}(r^is^j,r^ks^{\ell})&=(r^is^{i+j}r^ks^{i+j+{\ell}}r^{-i},s^kr^is^{j+k}).
\end{align*}

As mentioned above, in \cite{Kohl20}, Kohl enumerates all regular, $G$-stable subgroups of $ \Perm(D_{n}) $ isomorphic to $D_n$. Since a regular, $G$-stable subgroup arising from $\psi\in\FPF(G)$ never gives the same brace as one arising from a regular, $G$-stable subgroup which does {\it not} come from such a $\psi$, this allows us to determine when we have found all the Yang-Baxter solutions of dihedral type. More precisely:

\begin{proposition}
	With the constructions above, we obtain every Yang-Baxter solution of dihedral type if and only if one of the following holds:
	\begin{enumerate}
		\item $n=p^k$ for some prime $p$ and positive integer $k$;
		\item $n=2p^k$ for some odd prime $p$ and positive integer $k$;
	\end{enumerate}
\end{proposition}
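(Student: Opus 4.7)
The plan is to reduce the proposition to a numerical comparison between the brace-isomorphism classes produced by the construction of Theorem \ref{main} and the total list of dihedral-type braces on $D_n$ given by Kohl's enumeration in \cite{Kohl20}. Since the remark preceding the proposition tells us that braces from $\FPF(D_n)$-subgroups are disjoint from those coming from non-FPF regular $G$-stable subgroups, and since Theorem \ref{main} produces both $\B_\psi$ and its opposite $\B'_\psi$, the construction captures every dihedral-type YBE solution iff the set $\{\B_\psi,\B'_\psi\}_{\psi\in\FPF(D_n)}$, taken up to brace isomorphism, exhausts all regular $D_n$-stable subgroups of $\Perm(D_n)$ isomorphic to $D_n$.

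First I would tally $|\FPF(D_n)/{\sim}|$ from the classification above the proposition: one class when $n$ is odd, three classes (the trivial $\psi_0$, the merged $\alpha/\gamma$-class, and the merged $\beta/\delta$-class) when $n=2m$ with $m$ odd, and two classes when $n=2m$ with $m$ even (since $\alpha,\beta,\gamma,\delta$ collapse into one via the central map $\zeta$). Each equivalence class contributes a pair $(\B_\psi,\B'_\psi)$, and the example $\psi_0$ already shows that the opposite $\B'_\psi$ is generally a genuinely different class (realized by $\rho(G)$ rather than $\lambda(G)$, which is non-FPF). Next I would invoke \cite{Kohl20} to count the total regular, $D_n$-stable subgroups of $\Perm(D_n)$ isomorphic to $D_n$: for $n$ odd this equals the number of self-inverse elements of $\mathbb{Z}_n^\times$, and for $n=2m$ even there is an analogous expression in the divisors of $m$. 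Quotienting by the $\Aut(D_n)=\{\varphi_{i,j}\}$-action converts the subgroup count into the brace-isomorphism count.

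A case-by-case comparison then finishes the proof. For $n$ odd the counts agree precisely when $\mathbb{Z}_n^\times$ has only $\pm 1$ as self-inverse elements, which happens iff $n=p^k$. For $n=2m$ the analogous arithmetic collapse of Kohl's divisor expression happens precisely when $m$ has at most one odd prime factor, i.e.\ $m=2^{k-1}$ or $m=p^k$ for an odd prime $p$, giving $n\in\{2^k\}\cup\{2p^k\}$. The main obstacle will be performing the $\Aut(D_n)$-quotienting on Kohl's side explicitly enough to identify orbits, and verifying in the bad cases (e.g.\ $n=12$ or $n=15$) that the additional regular $G$-stable subgroups produced by the extra self-inverse elements or extra divisors actually lie in orbits disjoint from those hit by the FPF construction and its opposites; fortunately Kohl's enumeration is concrete enough that this reduces to direct inspection.
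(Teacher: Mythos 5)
Your high-level strategy---measure what the FPF construction together with opposites produces against Kohl's enumeration, using the remark that an FPF-arising subgroup never shares a brace with a non-FPF-arising one---is the same as the paper's, but the execution has a genuine gap at exactly the decisive step. You propose to compare numbers of \emph{brace-isomorphism classes}, which forces you to convert Kohl's count of regular, $G$-stable subgroups of $\Perm(D_n)$ into a count of $\Aut(D_n)$-orbits; that quotient is not a division (orbits have different sizes), you defer it to ``direct inspection,'' and the ``analogous expression in the divisors of $m$'' for even $n$ is never written down, so the arithmetic in your final paragraph is asserted rather than derived. The paper sidesteps the orbit problem entirely by comparing raw \emph{subgroup} counts: the subgroups $N_\psi$ and their opposites form a subset of all regular, $G$-stable subgroups isomorphic to $D_n$, so the construction is exhaustive if and only if the number $\mathscr{S}(D_n)$ of subgroups it produces equals Kohl's total $\mathscr{R}(D_n)=|\Upsilon_n|$ times an explicit factor depending on $v_2(n)$. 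Computing $\mathscr{S}(D_n)$ requires Childs's criterion for when two maps in $\FPF(D_n)$ give the \emph{same subgroup} (a finer relation than the brace equivalence you tallied, so your ``one/two/three classes'' bookkeeping is not the right input), together with a justification---not just the $\psi_0$ illustration---that the opposite subgroups are genuinely new; only then does the comparison reduce to elementary arithmetic with $|\Upsilon_n|$ and the $2$-adic valuation of $n$.

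A second concrete symptom that the computation was not carried out: your announced outcome of the even case does not match what the count comparison actually yields. In the paper's proof, once $4\mid n$ the two counts can never agree (the Kohl side is $|\Upsilon_n|$, a nontrivial power of $2$, times $m+1$ or $m+2$, while the FPF side is $\equiv 2 \pmod 4$), whereas you claim an ``arithmetic collapse'' for $m=2^{k-1}$, i.e.\ $n=2^k$. Without the explicit even-$n$ formula and the orbit analysis your argument cannot adjudicate these cases, and as written it appears reverse-engineered from the statement rather than proved; the cases $v_2(n)=0$ and $v_2(n)=1$ (where the criterion is $|\Upsilon_n|=2$, giving $p^k$ and $2p^k$) are the only ones your sketch could plausibly recover, and even there only after replacing class counts by subgroup counts as above.
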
 
\begin{proof}
	The main result of \cite{Kohl20} indicates that the number $\mathscr{R}(D_n)$ of regular, $G$-stable subgroups of $D_n$ isomorphic to $D_n$ is given by the following formula:
	\[ \mathscr{R}(D_n)= |\Upsilon_n|\cdot \begin{cases} 1 & v_2(n)=0\\ 2m+1 & v_2(n)=1 \\ m+1 & v_2(n)=2 \\ m+2 & v_2(n)>2\end{cases},\]
	where $v_2(n)$ is the $2$-adic valuation of $n$, $n=2m$ where appropriate, and $\Upsilon_n$ is the subgroup of self-inverse elements of $\mathbb{Z}_n^{\times}$. Note that $\Upsilon_n$ is nontrivial for $n>3$, and $|\Upsilon_n|=2$ if and only if $\mathbb{Z}_n^{\times}$ is cyclic, which holds if and only if $n=1,2,p^k$ or $2p^k$ for some odd prime $p$ and $k\ge 1$.
	
	On the other hand, in \cite{Childs13} Childs counts the number of regular, $G$-stable subgroups arising from fixed point free abelian endomorphisms on $D_n$. For our purposes, since the theory of opposites allow us to obtain {\it two} regular, $G$-stable subgroups from each $\psi$, we get that the number of subgroups  $\mathscr{S}(D_n)$ we obtain is
	\[\mathscr{S}(D_n) = \begin{cases} 2 & v_2(n)=0 \\ 4m+2 & v_2(n)=1 \\ 8m+2 & v_2(n)>1 \end{cases}.\]
	
	By comparing these, we see that if $n$ is odd we will get every regular, $G$-stable subgroup (hence, every Yang-Baxter solution) if and only if $|\Upsilon_n|=2$, i.e., $n$ is a power of a prime. If $v_2(n)=1$ we again require $|\Upsilon_n|=2$; since $n\ge 3$ is even this means $n=2p^k$ for some $p$. If $v_2(n)=2$ then we would need $(m+2)|\Upsilon_n|=4m+2$. Since $|\Upsilon_n|$ is necessarily a nontrivial power of $2$ and $4m+2\equiv 2 \pmod 4$ we see that we cannot get all regular, $G$-stable subgroups in this manner. Similarly, if $v_2(n)>2$ then we would need $(m+2)|\Upsilon_n|=8m+2$ which cannot happen.
\end{proof}

\subsection{The Alternating Group $A_4$}

It is well-known that $G=A_4$ can be expressed as a semidirect product $V\rtimes C_3$ where $V$ is the (unique) Sylow $2$-subgroup of $A_4$ and $C_3$ is generated by a $3$-cycle, say $x$. Then 
\[A_4=\{vx^i: 0\le i\le 2, v\in V\}.\]
Let $\psi\in\FPF(G)$ be nontrivial. Then, for some $v\in V$ we have either $\psi(v)=1_G$ or $\psi(v)$ is an element of order $2$, say $\psi(v)=w$. But $v$ and $w$ are conjugate in $A_4$, hence $\psi(w)=\psi(v)=w$, a contradiction. Thus, $\psi(v)=1_G$ for all $v\in V$. 

Since $\psi$ is assumed to be nontrivial, $\psi(x)$ is an element of order $3$ which is not conjugate to $x$. There are two conjugacy classes for the $3$-cycles in $A_4$, and every $3$-cycle is in a different class than its inverse. Thus each conjugacy class has four elements, hence there are four possibilities for $\psi(x)$. One such example is $\psi(x)=x^2$. The reader can verify that all four choices give valid fixed-point free abelian endomorphisms.

We omit the technical details above because each of the nontrivial elements of $\FPF(G)$ give the same brace. To see this, suppose $\psi_1(x)=y$ and $\psi_2(x)=z$. Necessarily, $y$ and $z$ are in the same conjugacy class, so there exists an inner automorphism of $ G $ carrying $\psi_1(x)$ to $\psi_2(x)$. 

%
Including the trivial map, we have
\begin{proposition}
	There are five fixed point free abelian maps on $A_4$ and two brace equivalence classes of fixed point free abelian maps.
\end{proposition}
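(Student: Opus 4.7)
The plan is to count $\FPF(A_4)$ directly using the structural analysis preceding the statement, then partition the resulting maps into brace equivalence classes using inner automorphisms of $A_4$.

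For the count, I would enumerate as follows. The trivial map $\psi_0$ always lies in $\FPF(A_4)$. For any nontrivial $\psi\in\FPF(A_4)$, the preceding discussion shows $\psi(V)=\{1_G\}$ and that $\psi(x)$ must lie in the conjugacy class of $A_4$ opposite to that of $x$. Since the $3$-cycles of $A_4$ split into two conjugacy classes of four elements each, there are exactly four candidate values for $\psi(x)$. Each candidate $\psi(x)=y$ uniquely determines an endomorphism: $V$ is normal with quotient $A_4/V\cong C_3=\gen{\overline{x}}$ and $y^3=1_G$, so the assignment $x\mapsto y$, $V\mapsto 1_G$ extends uniquely to a homomorphism. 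Abelianness is immediate since the image $\gen{y}$ is cyclic. For fixed point freeness I would observe that $\psi(vx^i)=y^i$, so a nontrivial fixed point requires $y^i=vx^i$ with $(v,i)\ne(1_G,0)$; for $i\ne 0$ this forces $y\in xV$, which fails because $xV$ is precisely the conjugacy class of $x$ and $y$ lies in the opposite class. This gives $|\FPF(A_4)|=1+4=5$.

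For the classification, I would separate $\psi_0$ from the nontrivial maps using Example \ref{triv}, so $\psi_0$ forms its own class. For the four nontrivial maps, suppose $\psi_1(x)=y$ and $\psi_2(x)=z$ with $y,z$ in the same conjugacy class. Choose $g\in A_4$ with $gyg^{-1}=z$, and let $\varphi\in\Aut(A_4)$ be conjugation by $g$. Since $\psi_1$ is constant on conjugacy classes, I would verify
\[ \varphi\psi_1\varphi^{-1}(v)=g\psi_1(g^{-1}vg)g^{-1}=g\cdot 1_G\cdot g^{-1}=1_G=\psi_2(v)\text{ for } v\in V, \]
\[ \varphi\psi_1\varphi^{-1}(x)=g\psi_1(g^{-1}xg)g^{-1}=gyg^{-1}=z=\psi_2(x), \]
so $\psi_2=\varphi\psi_1\varphi^{-1}$ on generators, hence everywhere. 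Combined with the preceding paragraph, this gives exactly two equivalence classes.

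The main obstacle I anticipate is the bookkeeping for fixed point freeness: one has to understand $\psi$ on the entire ``opposite'' conjugacy class, not just on $x$. The clean reformulation (fixed point freeness $\Leftrightarrow$ $y\notin xV$) reduces this to identifying the conjugacy class of $x$ in $A_4$ with the coset $xV$, a small but essential verification using that $|xV|=|V|=4$ coincides with the size of the conjugacy class.
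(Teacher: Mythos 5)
Your proof is correct and follows essentially the same route as the paper: decompose $A_4=V\rtimes\langle x\rangle$, show a nontrivial $\psi$ must kill $V$ and send $x$ into the opposite conjugacy class of $3$-cycles (four choices), and use inner automorphisms to collapse the four nontrivial maps into one brace class, with the trivial map forming its own class by Example \ref{triv}. The only difference is that you carry out the verification the paper leaves to the reader (that each choice extends to a genuine fixed point free abelian endomorphism, via identifying the class of $x$ with the coset $xV$, where for completeness one should note the containment comes from $[A_4,A_4]=V$ before invoking the size count), and this added detail is correct.
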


If we use $\psi(x)=x^2$ the solutions obtained are
\begin{align*}
R_{\psi}(v_1x^i,v_2x^j) &= (x^iv_2x^{j-i},x^iv_2x^{-i}v_1x^{-i}v_2x^{-i-j})\\
R_{\psi}'(x^iv_1,x^jv_2) &= (v_1x^{-i}v_2x^{i-j}v_1,x^{-j}v_1x^{i+j})
\end{align*}
where $v_1,v_2\in V$.

As $\Perm(A_4)$ has $10$ regular, $G$-stable subgroups \cite[Th. 7]{CarnahanChilds99}, we see that the solutions presented above are all of the solutions of type $A_4$.

The situation changes drastically for $A_5$, as we shall see.
\subsection{The Alternating Groups $A_n,n\ge 5$}

It is easy to compute $\FPF(A_n)$ for $n\ge 5$. In fact, we generalize, computing $\FPF(G)$ in the case $G$ is a simple group. This is well known and can be shown by direct computation or by considering Hopf-Galois structures on Galois extensions with simple Galois group \cite{Byott04c}.

\begin{proposition}
	Suppose $G$ is a nonabelian simple group. Then $\FPF(G)$ consists of only the trivial map.
\end{proposition}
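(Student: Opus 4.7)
The plan is to exploit the interplay between the kernel of an endomorphism and the simplicity of $G$; in fact, the fixed point free condition is not needed for this statement, only the abelian condition.

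Let $\psi \in \FPF(G)$. First I would observe that $\ker(\psi)$ is a normal subgroup of $G$. Since $G$ is simple, this forces $\ker(\psi) = G$ or $\ker(\psi) = \{1_G\}$. In the first case, $\psi$ is the trivial endomorphism $\psi_0$ of Example \ref{triv0}, and there is nothing more to show.

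In the second case, $\psi$ is injective, so the first isomorphism theorem gives $\psi(G) \cong G/\ker(\psi) \cong G$. But $\psi$ being an abelian endomorphism means $\psi(G)$ is abelian, which would force $G$ to be abelian, contradicting our hypothesis that $G$ is nonabelian. Hence this case cannot occur, and $\psi = \psi_0$ is the only possibility.

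The proof is essentially immediate once one unpacks the definitions, so there is no real obstacle. I note in passing that the fixed point free hypothesis plays no role here; it is the combination of \emph{abelian image} with \emph{simple nonabelian} that rules out every nontrivial endomorphism. As a consequence, by the results of Section 4, the only brace one obtains via this construction when $G$ is nonabelian simple is the trivial brace $(G,\cdot,\cdot)$, giving only the flip solution $R_{\psi_0}(g,h) = (h, h^{-1}gh)$ and its inverse from Example \ref{trivYBE}.
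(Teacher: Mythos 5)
Your proof is correct and follows the same route as the paper: the kernel is normal, so by simplicity it is trivial or all of $G$, and a trivial kernel forces $\psi(G)\cong G$ to be nonabelian, contradicting the abelian hypothesis. Your remark that the fixed point free condition is not needed is accurate and consistent with the paper's argument.
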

\begin{proof}
	If $\psi:G\to G$ is any homomorphism, then $\ker\psi=\{1_G\}$ or $\ker\psi=G$ since $\ker\psi\triangleleft G$. If $\ker\psi=\{1_G\}$ then $\psi(G)=G$ and hence $\psi$ cannot be abelian. Thus $\psi$ is trivial.
\end{proof}

Thus the only solutions to the Yang-Baxter equation obtained via fixed-point-free abelian maps are the solutions found in example \ref{trivYBE}. Unlike in the abelian case, we do get multiple solutions. Trivially, this gives us all the Yang-Baxter equation solutions of type $G$ for $G$ a nonabelian simple group.
\subsection{The Symmetric Groups $S_n,n\ge 5$}

Let $\psi:S_n\to S_n$ be a homomorphism. In a manner similar to the previous section, $\ker \psi=\{1_G\},A_n$, or $S_n$. If we assume $\psi$ is nontrivial and fixed point free, then $\ker \psi=A_n$. Now for $\sigma,\pi\in S_n\setminus A_n$ we have $\sigma\pi^{-1}\in A_n$, hence
\[\psi(\pi) = \psi(\pi\sigma^{-1})\psi(\sigma) = \psi(\sigma),\]
so $\psi$ is constant on odd permutations. Furthermore, $\psi(\sigma^2)=1_G$, hence a nontrivial $\psi$ sends odd permutations to some element of order $2$, say $\tau$. Finally, if $\tau$ is odd then $\psi(\tau)=\tau$ and we have a fixed point. In fact, we have:

\begin{proposition}
	Let $A_n^{(2)}$ be the largest subgroup of $A_n$ of exponent two. Then there is a bijection between $A_n^{(2)}$ and $\FPF(S_n)$ given by $\tau\mapsto\psi_{\tau}$, where
	\[\psi_{\tau}(\sigma)=\begin{cases} 1 & \sigma\in A_n \\ \tau & \sigma\notin A_n \end{cases}.\]  
	Furthermore, $\psi_{\tau_1}$ and $\psi_{\tau_2}$ are brace equivalent if and only if $\tau_1$ and $\tau_2$ have the same cycle structure.
\end{proposition}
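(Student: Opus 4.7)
The first claim essentially repackages the analysis already carried out in the paragraphs preceding the proposition. To show that $\tau\mapsto\psi_{\tau}$ is well-defined, I would observe that the prescription defining $\psi_{\tau}$ factors through the sign quotient $S_n/A_n$, sending the nontrivial coset to $\tau$; this prescription is a homomorphism precisely because $\tau^2=1_{S_n}$, and its image $\{1_{S_n},\tau\}$ is abelian. Fixed-point freeness is immediate, since the relation $\psi_{\tau}(\sigma)=\sigma$ forces $\sigma\in\{1_{S_n},\tau\}$, and $\tau\in A_n$ yields $\psi_{\tau}(\tau)=1_{S_n}\neq\tau$ whenever $\tau\neq 1_{S_n}$. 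Injectivity of $\tau\mapsto\psi_{\tau}$ follows by evaluating on any odd permutation, while surjectivity is exactly the content of the discussion preceding the statement: any nontrivial $\psi\in\FPF(S_n)$ has kernel $A_n$, is constant on odd permutations with common value some $\tau$ satisfying $\tau^2=1_{S_n}$, and this $\tau$ must lie in $A_n$ in order to avoid a fixed point.

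For the equivalence statement, the plan is to compute directly how $\Aut(S_n)$ acts on the family $\{\psi_{\tau}\}_{\tau\in A_n^{(2)}}$. Since $A_n$ is the unique subgroup of index two in $S_n$ for $n\geq 5$, every $\varphi\in\Aut(S_n)$ preserves parity, and a short direct calculation then yields
\[\varphi\,\psi_{\tau}\,\varphi^{-1}=\psi_{\varphi(\tau)}.\]
Hence $\psi_{\tau_1}$ and $\psi_{\tau_2}$ are brace equivalent if and only if $\tau_1$ and $\tau_2$ lie in the same $\Aut(S_n)$-orbit. For $n\neq 6$ one has $\Aut(S_n)=\operatorname{Inn}(S_n)$, so this orbit relation coincides with $S_n$-conjugacy, which is the same as having the same cycle type, completing the generic case.

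The main obstacle is the exceptional outer automorphism class of $S_6$, which could in principle merge orbits of distinct cycle types and spoil the conclusion. However, this outer automorphism is well-known to permute the conjugacy classes of $S_6$ by the involutive swaps $(2,1^4)\leftrightarrow(2^3)$, $(3,1^3)\leftrightarrow(3^2)$, $(4,2)\leftrightarrow(4,1^2)$, and $(6)\leftrightarrow(3,2,1)$, while fixing the class $(2^2,1^2)$. Since every nontrivial element of $A_6^{(2)}$ has cycle type $(2^2,1^2)$, the outer automorphism restricts to a cycle-type preserving permutation of $A_6^{(2)}$, and the conclusion extends to $n=6$ without modification.
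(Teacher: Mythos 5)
Your proof is correct and takes essentially the same route as the paper: the well-definedness/bijection part repackages the preceding discussion, the identity $\varphi\psi_{\tau}\varphi^{-1}=\psi_{\varphi(\tau)}$ plus $\Aut(S_n)=\mathrm{Inn}(S_n)$ handles $n\neq 6$, and $n=6$ is settled because every nontrivial element of $A_6^{(2)}$ has cycle type $(2^2,1^2)$, which is exactly the paper's own disposal of that case. One small factual slip: the outer automorphism of $S_6$ does not interchange $(4,2)$ and $(4,1^2)$ — these classes have opposite parity, so by the very parity-preservation you invoke each is fixed — but this detail is never used, since only the involution classes in $A_6$ matter.
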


\begin{proof}
	From the discussion above it is clear that any $\psi\in\FPF(G)$ must be of this form, and it is routine to show that $\psi_{\tau}\in\FPF(G)$ for any $\tau\in A_n$ with $\tau^2=1_G$. Any $\tau_1,\;\tau_2\in A_n$ with $\tau_1^2=\tau_2^2=1_G$ are conjugate if and only if they have the same cycle structure. Now, if $\tau_2=\xi\tau_1\xi^{-1},\;\xi\in S_n$ then the map $C(\xi):S_n\to S_n$ given by conjugation by $\xi$ is an automorphism. If $\sigma\in S_n$ then  
	\[C(\xi)\psi_1C(\xi)^{-1}(\sigma)= C(\xi)\psi_1(\xi^{-1}\sigma\xi)=C(\xi)(1_G)=1_G=\psi_2(\sigma)\]
 if $\sigma$ is even; and 	\begin{align*}
	C(\xi)\psi_1C(\xi)^{-1}(\sigma) &= C(\xi)\psi_1(\xi^{-1}\sigma\xi) \\
	&= C(\xi)\tau_1\\
	&= \xi\tau_1\xi^{-1}\\
	&=\tau_2\\
	&=\psi_2(\sigma)
	\end{align*}
	if $\sigma\in S_n$ is odd. Thus if $\tau_1,\;\tau_2$ have the same cycle structure then the corresponding maps $\psi_1$ and $\psi_2$ are brace equivalent.
	
	Conversely, for $n\ne 6$ every automorphism of $S_n$ is inner, hence these account for all the brace equivalences. Finally, if $n=6$ every element of order $2$ has the same cycle type, so the result follows.
\end{proof}

\begin{corollary}
	Let $\tau \in A_n$ have order at most two. Then
	\[R_{\tau}(\sigma,\pi)=\begin{cases}
	(\pi,\pi^{-1}\sigma\pi) & \sigma,\pi\in A_n\\
	(\pi,\tau\pi^{-1}\sigma\pi\tau) & \sigma\in A_n,\;\pi\notin A_n\\
	(\tau\pi\tau,\tau\pi^{-1}\tau\sigma\tau\pi\tau) & \sigma\notin A_n,\;\pi\in A_n\\
	(\tau\pi\tau,\pi^{-1}\tau\sigma\tau\pi) & \sigma,\pi\notin A_n\\
	\end{cases},\;
	R'_{\tau}(\sigma,\pi)=\begin{cases}
	(\sigma\pi\sigma^{-1},\sigma)& \sigma,\pi\in A_n\\
	(\sigma\pi\sigma^{-1},\tau\sigma\tau) & \sigma\in A_n,\;\pi\notin A_n\\
	(\sigma\tau\pi\tau\sigma^{-1},\sigma) & \sigma\notin A_n,\;\pi\in A_n\\
	(\sigma\tau\pi\tau\sigma^{-1},\tau\sigma\tau) & \sigma,\pi\notin A_n\\
	\end{cases}
	\]
	are solutions to the Yang-Baxter equation.
\end{corollary}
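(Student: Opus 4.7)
The corollary is essentially a substitution exercise: the main theorem gives closed formulas for $R_{\psi}$ and $R'_{\psi}$ in terms of an arbitrary $\psi\in\FPF(G)$, and the preceding proposition identifies the elements of $\FPF(S_n)$ explicitly as the maps $\psi_{\tau}$ for $\tau\in A_n^{(2)}$. So my plan is simply to insert $\psi = \psi_{\tau}$ into the formulas and reduce by parity.

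First, I would record that $\psi_{\tau}$ depends only on whether its argument is even or odd, and that $\psi_{\tau}$ on an odd permutation always returns $\tau$, which satisfies $\tau^{2}=1_{G}$. Consequently $\psi_{\tau}(\xi^{-1}) = \psi_{\tau}(\xi)$ for every $\xi\in S_n$ (both are $1_G$ in the even case and $\tau$ in the odd case). Next I would observe that the arguments of $\psi_{\tau}$ appearing in the Main Theorem are $\sigma^{\pm 1}$, $\pi^{\pm 1}$, $\pi\sigma^{-1}$, and $\sigma\pi^{-1}$; the first two are controlled by the parities of $\sigma$ and $\pi$ individually, and the last two are even precisely when $\sigma$ and $\pi$ share the same parity.

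Then I would split into four cases according to the parities of $\sigma$ and $\pi$ and evaluate each $\psi_{\tau}$-term:
\begin{itemize}
\item If $\sigma,\pi\in A_n$, every $\psi_{\tau}$ term is $1_G$, so $R_{\tau}(\sigma,\pi) = (\pi,\pi^{-1}\sigma\pi)$ and $R'_{\tau}(\sigma,\pi) = (\sigma\pi\sigma^{-1},\sigma)$ immediately.
\item If $\sigma\in A_n$ and $\pi\notin A_n$, only the $\pi^{\pm 1}$ and the mixed-parity terms $\pi\sigma^{-1},\sigma\pi^{-1}$ are odd, contributing copies of $\tau$, and the asserted formulas fall out after collecting.
\item The case $\sigma\notin A_n,\pi\in A_n$ is analogous with the roles reversed.
\item If $\sigma,\pi\notin A_n$, the terms $\psi_{\tau}(\sigma^{\pm 1})$ and $\psi_{\tau}(\pi^{\pm 1})$ are each $\tau$, while $\psi_{\tau}(\pi\sigma^{-1}) = \psi_{\tau}(\sigma\pi^{-1}) = 1_G$, and again the formulas drop out directly.
\end{itemize}
In each case the relation $\tau^{2}=1_G$ is used only implicitly to collapse consecutive $\tau$'s when they appear; no other simplification is required.

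There is essentially no hard step: the only thing to watch is the bookkeeping of parities of the six composite arguments $\sigma^{\pm 1},\pi^{\pm 1},\pi\sigma^{-1},\sigma\pi^{-1}$ and lining up which copies of $\tau$ cancel. Once the case analysis is organized, the four pairs of formulas for $R_{\tau}$ and $R'_{\tau}$ read off directly from the Main Theorem, and that $R_{\tau}$ and $R'_{\tau}$ are non-degenerate mutually-inverse set-theoretic solutions of the YBE is inherited immediately from Theorem \ref{main}(1).
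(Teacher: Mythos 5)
Your proposal is correct and is essentially the paper's (implicit) argument: the corollary is obtained by substituting $\psi=\psi_{\tau}$ into the formulas of Theorem \ref{main} and reducing case-by-case according to the parities of $\sigma$, $\pi$, and the mixed terms $\pi\sigma^{-1}$, $\sigma\pi^{-1}$, with the YBE, non-degeneracy, and mutual inverseness inherited from the theorem. The four resulting cases match the stated formulas exactly, so nothing further is needed.
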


Here, we get all of the solutions of type $S_n$. This can be seen from \cite[Th. 5]{CarnahanChilds99}, where the authors find two families of regular, $G$-stable subgroups of $\Perm(G)$, each parameterized by elements of order $2$ in $S_n$.

\subsection{The Family of Nonabelian Metacyclic Groups $M_{p,q}$}

For our last set of examples we consider the simplest nonabelian metacyclic groups. Let $p,q$ be primes with $p\equiv 1\pmod q$, and define
\[M_{p,q}=\gen{s,t:s^p=t^q=1_G,\;tst^{-1}=s^d},\] where $d$ is chosen to have order $ q $ modulo $ p $. (Note that the group is independent of the choice of $d$: changing its value merely changes the presentation.) By Sylow theory, the Sylow $p$-subgroup $\gen{s}$ is normal in $M_{p,q}$, however since $sts^{-1} \notin\gen{t}$ 
 the Sylow $q$-subgroups are not normal in $M_{p,q}$. Thus, if $\psi$ is any endomorphism of $M_{p,q}$, then $\ker\psi=\{1_G\},\;\gen{s}$, or $M_{p,q}$. Thus for any $\psi$ fixed point free and abelian, $\ker \psi=\gen{s}$ and hence $\psi(s)=1_G$.

It therefore suffices to determine $\psi(t)$. Since the Sylow $p$-subgroup is normal, an element $s^it^j$ will have order $q$ if and only if $j\ne 0$. Let us write \[\psi(t)=s^i t^j,\;0\le i \le p-1,\;1\le j\le q-1.\]
Then $\psi$ respects the relations in the presentation of $M_{p,q}$, hence is an endomorphism. Since $\psi(M_{p,q})=\gen{s^it^j}\cong C_q$ the map is abelian. Suppose $\psi(s^kt^{\ell})=s^kt^{\ell}$. Then
\[(s^it^j)^{\ell} = s^kt^{\ell}.\]
For this to hold we require $j\ell\equiv \ell\pmod q$, so either $j=1$ or $\ell=0$. If $\ell=0$ then $\psi(s^k)=s^k$, so $k=0$ and we have the trivial element. If $j=1$ then $\psi(s^it)=s^it$ and we have a fixed point. Thus:
\begin{proposition}
	If $\psi\in \FPF(M_{p,q})$ is nontrivial, then $\psi=\psi_{i,j}$, where
	\[\psi_{i,j}(s)=1,\;\psi_{i,j}(t)=s^it^j,\;0\le i \le p-1, 2\le j \le q-1.\]
\end{proposition}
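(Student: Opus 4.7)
The plan is to pin down $\psi$ by first determining its kernel and then its value on $t$. I would begin by listing the normal subgroups of $M_{p,q}$: the Sylow $p$-subgroup $\gen{s}$ is unique (hence normal), while the Sylow $q$-subgroups are not normal since $sts^{-1}\notin\gen{t}$, and any proper nontrivial normal subgroup is contained in $\gen{s}$ by index considerations, so the only possibilities are $\{1_G\}$, $\gen{s}$, and $M_{p,q}$. Hence $\ker\psi$ must be one of these three.

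Next I would rule out the two extreme cases. If $\ker\psi=\{1_G\}$ then $\psi$ is injective, hence bijective since $M_{p,q}$ is finite, so $\psi(M_{p,q})=M_{p,q}$ is nonabelian, contradicting the abelian hypothesis. If $\ker\psi=M_{p,q}$ then $\psi=\psi_0$, which is excluded. Therefore $\ker\psi=\gen{s}$ and $\psi(s)=1_G$, so $\psi$ is determined by $\psi(t)$. Writing $\psi(t)=s^it^j$ with $0\le i\le p-1$ and $0\le j\le q-1$, the image $\gen{s^it^j}$ is cyclic (so automatically abelian), and the defining relations are automatically respected (the relation involving $s$ trivially because $\psi(s)=1_G$; the relation $t^q=1_G$ forces $\psi(t)^q=1_G$). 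Among the elements $s^it^j$, those with $j\ne 0$ have order exactly $q$, while those with $j=0$ have order dividing $p$, so the relations are respected precisely when either $j\ne 0$, or $j=i=0$ (the latter yielding $\psi_0$, which we discard).

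Finally, I would enforce the fixed-point-free condition. Using $tst^{-1}=s^d$ and induction on $\ell$, I expect the closed form
\[(s^it^j)^\ell \;=\; s^{\,i(1+d^j+d^{2j}+\cdots+d^{(\ell-1)j})}\, t^{\,j\ell}.\]
A nontrivial fixed point $s^kt^\ell$ satisfies $(s^it^j)^\ell=s^kt^\ell$, so comparing $t$-components forces $j\ell\equiv\ell\pmod q$, i.e. $(j-1)\ell\equiv 0\pmod q$. Since $q$ is prime, either $\ell=0$ or $j=1$. If $\ell=0$, the equation reduces to $1_G=s^k$, so $k=0$ and the fixed point is trivial. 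If $j=1$, taking $\ell=1$ yields $\psi(s^it)=s^it$, producing a genuine nontrivial fixed point that must be excluded. Hence $j\notin\{0,1\}$, giving exactly the range $2\le j\le q-1$ stated in the proposition.

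The only technical obstacle is verifying the closed form for $(s^it^j)^\ell$, which is a routine induction using the twist $t^a s t^{-a}=s^{d^a}$; everything else reduces to elementary arithmetic modulo $p$ and $q$, and the primality of $q$ does the essential work in the final step.
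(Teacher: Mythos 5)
Your proposal is correct and follows essentially the same route as the paper: determine $\ker\psi$ from the normality of $\gen{s}$ and the non-normality of the Sylow $q$-subgroups, write $\psi(t)=s^it^j$, and then use the fixed-point condition $(s^it^j)^\ell=s^kt^\ell$, whose $t$-component forces $j\ell\equiv\ell\pmod q$ and hence excludes $j=1$ (and handles $\ell=0$ as the trivial case). The only differences are cosmetic: you make explicit the closed form for $(s^it^j)^\ell$ and the elimination of the injective-kernel and $j=0$ cases, which the paper leaves implicit.
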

It follows from \cite[Lemma 8.13]{KochTruman20b} that $\psi_{i_1,j_1}$ is brace equivalent to $\psi_{i_2,j_2}$ if and only if $i_1=i_2$. From this we obtain
\begin{corollary}
	The set $\{\psi_j:=\psi_{0,j}:2\le j \le q-1\}$ is a complete set of nontrivial brace classes of fixed point free abelian maps.
\end{corollary}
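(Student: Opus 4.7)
The plan is to combine the preceding proposition (which exhaustively lists the nontrivial elements of $\FPF(M_{p,q})$ as $\psi_{i,j}$ with $0\le i\le p-1$ and $2\le j\le q-1$) with an explicit description of the conjugation action of $\Aut(M_{p,q})$ on these maps. Then the corollary reduces to showing that the $j$-index is preserved by this action and that for each $j$ the $i$-index can be normalized to $0$.

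First I would determine $\Aut(M_{p,q})$. Since $\langle s\rangle$ is characteristic (being the unique Sylow $p$-subgroup), any $\varphi\in\Aut(M_{p,q})$ satisfies $\varphi(s)=s^a$ for some $a\in(\mathbb{Z}/p)^{\times}$. The image of $t$ must have order $q$ and lie outside $\langle s\rangle$, and preserving the relation $tst^{-1}=s^d$ forces its $t$-exponent to be $1$; hence $\varphi(t)=s^b t$ for some $b\in\mathbb{Z}/p$. Conversely, each such pair $(a,b)$ defines an automorphism.

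Next I would compute $\varphi\psi_{i,j}\varphi^{-1}$. The value on $s$ is clearly $1$, so only the value on $t$ matters. Using $\varphi^{-1}(t)=s^{-a^{-1}b}t$ and $\psi_{i,j}(s)=1$, one obtains $\psi_{i,j}\varphi^{-1}(t)=s^it^j$, and then $\varphi(s^it^j)=s^{ai}(s^bt)^j$. A short induction using $tst^{-1}=s^d$ yields $(s^bt)^j=s^{b(d^j-1)/(d-1)}t^j$, so $\varphi\psi_{i,j}\varphi^{-1}=\psi_{i^{*},j}$ where $i^{*}=ai+b(d^j-1)/(d-1)\pmod{p}$.

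The upshot is that the $j$-index is a brace-equivalence invariant, while the $i$-index can be shifted arbitrarily: for $2\le j\le q-1$ we have $d^j\not\equiv 1\pmod{p}$ (since $d$ has order exactly $q$), so $(d^j-1)/(d-1)$ is a unit modulo $p$, and choosing $a=1$ together with an appropriate $b$ normalizes $i^{*}$ to $0$. Therefore every $\psi_{i,j}$ is brace equivalent to $\psi_{0,j}$, and distinct $j$ values give inequivalent classes, which is precisely the corollary. There is no serious obstacle; the only nuance is the arithmetic verifying that $(d^j-1)/(d-1)$ is a unit for the allowed range of $j$, and this is exactly why the preceding proposition restricts $j$ to $\{2,\dots,q-1\}$.
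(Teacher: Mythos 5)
Your proof is correct, and it takes a more self-contained route than the paper: where the paper simply invokes Lemma 8.13 of the opposite-braces paper \cite{KochTruman20b} to identify the equivalence classes, you compute $\Aut(M_{p,q})=\{\varphi_{a,b}:\varphi_{a,b}(s)=s^a,\ \varphi_{a,b}(t)=s^bt\}$ directly and work out the conjugation action, obtaining $\varphi\psi_{i,j}\varphi^{-1}=\psi_{i^*,j}$ with $i^*=ai+b(d^j-1)/(d-1)$, so that $j$ is an invariant while $i$ can be normalized to $0$ because $(d^j-1)/(d-1)$ is a unit mod $p$ when $d^j\not\equiv 1$. This buys transparency and independence from the cited reference, at the cost of a page of routine computation; note also that your criterion (equivalence iff $j_1=j_2$) is the one actually consistent with the corollary, whereas the sentence in the paper preceding the corollary states the criterion as $i_1=i_2$, which appears to be a typographical slip in the indices. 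One small quibble: the restriction $j\ge 2$ in the proposition is there because $j=1$ produces a fixed point ($\psi(s^it)=s^it$), not because of the unit condition on $(d^j-1)/(d-1)$ (which holds for $j=1$ as well); this does not affect the validity of your argument.
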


The endomorphism $\psi_j$ produces the following solutions to the Yang-Baxter equation:
\begin{align*}
R_{\psi_j}(s^kt^{\ell}, s^mt^n) &= (
t^{-j{\ell}}s^mt^{n+j{\ell}}, t^{j(n-{\ell})}(s^mt^n)^{-1}t^{jl}s^kt^{{\ell}-j{\ell}}s^mt^{n+j({\ell}-n)})\\
R'_{\psi_j}(s^kt^{\ell}, s^mt^n) &= (s^kt^{{\ell}-j{\ell}}s^mt^{n+j{\ell}}(s^kt^{\ell})^{-1}, t^{jn}s^kt^{{\ell}-jn}).
\end{align*}

One can see that we get all solutions of metacyclic type by using the regular, $G$-stable classification from \cite{Byott04b}; alternatively, see \cite[\S 8]{KochTruman20b}, which explicitly shows this.

\bibliographystyle{alpha} 
\bibliography{MyRefs}

\begin{thebibliography}{Byo04b}

\bibitem[Bac16]{Bachiller16}
David Bachiller.
\newblock Counterexample to a conjecture about braces.
\newblock {\em J. Algebra}, 453:160--176, 2016.

\bibitem[Bax72]{Baxter72}
Rodney~J Baxter.
\newblock Partition function for the eight-vertex lattice model.
\newblock {\em Ann. Phys.}, 70:193--228, 1972.

\bibitem[Byo96]{Byott96}
Nigel~P. Byott.
\newblock Uniqueness of {H}opf {G}alois structure for separable field
  extensions.
\newblock {\em Comm. Algebra}, 24(10):3217--3228, 1996.

\bibitem[Byo04a]{Byott04b}
N.~P. Byott.
\newblock {H}opf-{G}alois structures on {G}alois field extensions of degree
  {$pq$}.
\newblock {\em J. Pure Appl. Algebra}, 188(1 - 3):45--57, 2004.

\bibitem[Byo04b]{Byott04c}
Nigel~P. Byott.
\newblock Hopf-{G}alois structures on field extensions with simple {G}alois
  groups.
\newblock {\em Bull. London Math. Soc.}, 36(1):23--29, 2004.

\bibitem[CC99]{CarnahanChilds99}
Scott Carnahan and Lindsay Childs.
\newblock Counting {H}opf {G}alois structures on non-abelian {G}alois field
  extensions.
\newblock {\em J. Algebra}, 218(1):81--92, 1999.

\bibitem[Che12]{Chen12}
Rebecca~S. Chen.
\newblock Generalized {Y}ang-{B}axter equations and braiding quantum gates.
\newblock {\em J. Knot Theory Ramifications}, 21(9):1250087, 18, 2012.

\bibitem[Chi00]{Childs00}
Lindsay~N. Childs.
\newblock {\em Taming wild extensions: {H}opf algebras and local {G}alois
  module theory}, volume~80 of {\em Mathematical Surveys and Monographs}.
\newblock American Mathematical Society, Providence, RI, 2000.

\bibitem[Chi13]{Childs13}
Lindsay~N. Childs.
\newblock Fixed-point free endomorphisms and {H}opf {G}alois structures.
\newblock {\em Proc. Amer. Math. Soc.}, 141(4):1255--1265, 2013.

\bibitem[Dri92]{Drinfeld92}
V.~G. Drinfeld.
\newblock On some unsolved problems in quantum group theory.
\newblock In {\em Quantum groups ({L}eningrad, 1990)}, volume 1510 of {\em
  Lecture Notes in Math.}, pages 1--8. Springer, Berlin, 1992.

\bibitem[GP87]{GreitherPareigis87}
Cornelius Greither and Bodo Pareigis.
\newblock Hopf {G}alois theory for separable field extensions.
\newblock {\em J. Algebra}, 106(1):239--258, 1987.

\bibitem[GV17]{GuarnieriVendramin17}
L.~Guarnieri and L.~Vendramin.
\newblock Skew braces and the {Y}ang-{B}axter equation.
\newblock {\em Math. Comp.}, 86(307):2519--2534, 2017.

\bibitem[Jim89a]{Jimbo89a}
Michio Jimbo.
\newblock Introduction to the {Y}ang-{B}axter equation.
\newblock In {\em Braid group, knot theory and statistical mechanics}, volume~9
  of {\em Adv. Ser. Math. Phys.}, pages 111--134. World Sci. Publ., Teaneck,
  NJ, 1989.

\bibitem[Jim89b]{Jimbo89b}
Michio Jimbo, editor.
\newblock {\em Yang-{B}axter equation in integrable systems}, volume~10 of {\em
  Advanced Series in Mathematical Physics}.
\newblock World Scientific Publishing Co., Inc., Teaneck, NJ, 1989.

\bibitem[Jim94]{Jimbo94}
Michio Jimbo.
\newblock Introduction to the {Y}ang-{B}axter equation.
\newblock In {\em Braid group, knot theory and statistical mechanics, {II}},
  volume~17 of {\em Adv. Ser. Math. Phys.}, pages 153--176. World Sci. Publ.,
  River Edge, NJ, 1994.

\bibitem[Koh98]{Kohl98}
Timothy Kohl.
\newblock Classification of the {H}opf {G}alois structures on prime power
  radical extensions.
\newblock {\em J. Algebra}, 207(2):525--546, 1998.

\bibitem[Koh20]{Kohl20}
Timothy Kohl.
\newblock Enumerating dihedral {H}opf-{G}alois structures acting on dihedral
  extensions.
\newblock {\em J. Algebra}, 542:93--115, 2020.

\bibitem[KT20a]{KochTruman20}
Alan Koch and Paul~J. Truman.
\newblock Opposite skew left braces and applications.
\newblock {\em J. Algebra}, 546:218--235, 2020.

\bibitem[KT20b]{KochTruman20b}
Alan Koch and Paul~J. Truman.
\newblock Skew left braces and isomorphism problems for {H}opf-{G}alois
  structures on galois extensions.
\newblock {\em arXiv:2005.05809}, 2020.

\bibitem[LR97]{LambeRadford97}
Larry~A. Lambe and David~E. Radford.
\newblock {\em Introduction to the quantum {Y}ang-{B}axter equation and quantum
  groups: an algebraic approach}, volume 423 of {\em Mathematics and its
  Applications}.
\newblock Kluwer Academic Publishers, Dordrecht, 1997.

\bibitem[Rum07]{Rump07}
Wolfgang Rump.
\newblock Braces, radical rings, and the quantum {Y}ang-{B}axter equation.
\newblock {\em J. Algebra}, 307(1):153--170, 2007.

\bibitem[Yan67]{Yang67}
C.~N. Yang.
\newblock Some exact results for the many-body problem in one dimension with
  repulsive delta-function interaction.
\newblock {\em Phys. Rev. Lett.}, 19:1312--1315, 1967.

\end{thebibliography}

\end{document}